\newtheorem{Theorem}{Theorem}
\newtheorem{Lemma}[Theorem]{Lemma}
\newtheorem{Proposition}[Theorem]{Proposition}
\newcommand{\eps}{\varepsilon}
\title{Stability interchanges in a curved Sitnikov problem}
\begin{document}

\author[L.Franco-P\'{e}rez]{Luis Franco-P\'{e}rez }
\email{lfranco@correo.cua.uam.mx}
\author[M. Gidea]{Marian  Gidea}
\email{marian.gidea@gmail.com}
\author[M. Levi]{Mark Levi}
\email{levi@math.psu.edu}
\author[E. P\'{e}rez-Chavela]{Ernesto  P\'{e}rez-Chavela}
\email{epc@xanum.uam.mx}

\address[UAM]{Departamento de Matem\'{a}ticas Aplicadas y Sistemas, UAM-Cuajimalpa, Av. Vasco de Quiroga 4871,  M\'{e}xico, D.F. 05348, M\'{e}xico.}
\address[YU]{Department of Mathematical Sciences, Yeshiva University, 245 Lexington Ave, New York, NY 10016, USA.}
\address[PSU]{Mathematics Department, Penn State University, University Park, PA 16802, USA.}
\address[ITAM]{Departamento de Matem\'{a}ticas, ITAM M\'exico, R\'io Hondo 1, Col. Progreso Tizap\'an, M\'exico D.F. 01080 .}

\begin{abstract}

We consider a curved Sitnikov problem, in which an infinitesimal particle moves on a circle under the gravitational influence of two equal masses in Keplerian motion within a plane perpendicular to that circle.  There are two equilibrium points, whose stability we are studying.  We show that one of the equilibrium points undergoes stability interchanges as the semi-major axis of the Keplerian ellipses approaches the diameter of that circle. To derive this result, we first formulate and prove a general theorem on stability interchanges, and then we apply it to our model.  The motivation for our model resides with the $n$-body problem in  spaces of constant curvature.
\end{abstract}

\keywords{Stability interchanges, qualitative theory, Sitnikov problem}

\maketitle

\section{Introduction}
\label{sec:introduction}

\subsection{A curved Sitnikov problem}\label{intro_confined} We consider the following curved Sitnikov problem:  Two bodies  of equal masses  (primaries)  move, under mutual gravity,  on Keplerian ellipses  about their center of mass. A third, massless particle is confined to a circle passing through the center of mass of the primaries, denoted by $P_0$, and   perpendicular to the  plane of motion of the primaries; the second intersection point of the circle with that plane  is denoted by $P_1$. We assume that the massless particle moves under the gravitational influence  of the primaries without affecting them. The dynamics of the massless particle has two equilibrium points, at $P_0$ and  $P_1$.  We  focus on the local dynamics near these two points, more precisely, on the dependence of the linear stability of these points on the parameters of the problem.

When the Keplerian ellipses are not too large or too small, $P_0$ is a local center and $P_1$ is a hyperbolic fixed point. When we increase the size of the Keplerian ellipses, as the distance  between $P_1$ and the closest ellipse  approaches zero, then $P_1$ undergoes stability interchanges.
That is, there exists a sequence of open, mutually disjoint intervals of values of  the semi-major axis of the Keplerian ellipses, such that, on each of these intervals the linearized stability of $P_1$ is strongly stable,  and each complementary interval contains values where the linearized stability  is not strongly stable, i.e., it is either hyperbolic or parabolic.
The length  of these intervals approaches zero when the semi-major axis of the Keplerian ellipses approaches the diameter of the circle on which the massless particle moves. This phenomenon is  the main focus in the paper.

{It is stated in \cite{Alfaro} and suggested by numerical evidence \cite{Hagel_Lothka,Kalas} that the
 linearized stability of the  point $P_0$ also undergoes stability interchanges when the size of the binary is kept fixed and the eccentricity  of the Keplerian ellipses approaches $1$.
 }

Stability interchanges of the type described above are ubiquitous in systems of varying parameters; they appear, for example,  in the classical Hill's equation and in the Mathieu equation \cite{Magnus}.  To prove the occurrence of this phenomenon in our curved Sitnikov problem, we first formulate a general result on stability interchanges for a    general class of simple mechanical systems. More precisely, we consider the motion of two bodies  --- one massive and one massless --- which are confined to a pair of curves and move under Newtonian gravity. We let the  distance between the two curves be controlled by some parameter $\lambda$.    We assume that the position of the infinitesimal particle that achieves the minimum distance between the curves is an equilibrium point. We show that, in the case when the  minimum distance between the two curves approaches zero, corresponding to $\lambda\rightarrow 0$, there existence a sequence of mutually disjoint open intervals  $(\lambda_{2n-1},\lambda_{2n})$,  whose lengths approach  zero as $\lambda\to 0$, such that whenever $\lambda \in (\lambda_{2n-1},\lambda_{2n})$ the linearized stability of the equilibrium point is strongly stable,  and each complementary interval contains values of $\lambda$ where the linearized stability  is not strongly stable.
From this result we derive the above mentioned stability interchange  result   for the curved Sitnikov problem.

The curved Sitnikov problem  considered in this paper is an extension of the classical Sitnikov problem described in Section~\ref{classical_sitnikov} (also, see e.g.,  \cite{Mos}). When the radius of the circle approaches infinity, in the limit we obtain the classical Sitnikov problem --- the infinitesimal mass moves along the line perpendicular to the plane of the primaries and passing through the center of mass. The equilibrium point $P_1$ becomes the point at infinity  and is of a degenerate  hyperbolic type. Thus,  stability interchanges of $P_1$ represent a new phenomenon that we encounter in the curved Sitnikov problem but not in the classical one.
{Also in the last case, it is well known that for $\eps=0$ the classical Sitnikov problem is integrable. In the case of the curved one, numerical evidence suggests that it is not  (see Figure \ref{fig:poincare}).}

The motivation for considering the curved Sitnikov problem  resides in  the $n$-body problem in spaces with constant curvature, and with models of planetary motions in binary star systems, as discussed in Section~\ref{n_body_curvature}.

\subsection{Classical Sitnikov problem}\label{classical_sitnikov}
We recall here the classical Sitnikov problem. Two bodies (primaries) of equal masses $m_1=m_2=1$ move in a plane on Keplerian ellipses  of eccentricity $\eps$ about their center of mass, and a third, massless particle moves on a line perpendicular to the plane of the primaries and   passing through their center of mass. By choosing the plane of the primaries the $xy$-plane and the line on which the massless particle moves the $z$-axis, the equations of motion of the massless particle can be written, in appropriate units, as
\begin{equation}\label{eqn:sitnikov1} \ddot z=-\frac{2z}{(z^2+r^2(t))^{3/2}},
\end{equation} where $r(t)$ is the distance from the primaries to their center of mass given by
\begin{equation}\label{eqn:sitnikov2} r(t)=1-\eps\cos u(t),
\end{equation}
where $u(t)$ is the eccentric anomaly in the Kepler problem. By normalizing the time we can assume that the period of the primaries is $2\pi$, and
\begin{equation}\label{eqn:sitnikov3} r(t)=(1-\eps\cos t)+O(\eps^2),
\end{equation}
for small $\eps$.

When   $\eps=0$, i.e., the primaries move on  a circular orbit and the dynamics of the massless particle is described by a 1-degree of freedom Hamiltonian and so is integrable. Depending on the energy level, one has the following types of solutions:  an equilibrium solution, when the particle rests at the center of mass of the primaries; periodic solutions around the center of mass; escape orbits, either parabolic, that  reach infinity with zero velocity, or hyperbolic, that  reach infinity with positive velocity.

When $\eps\in(0,1)$, the differential equation \eqref{eqn:sitnikov1} is non-autonomous and the system is non-integrable. Consider the case $\eps \ll 1$.
The system also has bounded and unbounded orbits, as well as  unbounded  oscillatory orbits and  capture orbits (oscillatory orbits are those for which   $\limsup_{t\to \pm\infty}|z(t)|=+\infty$ and $\liminf_{t\to \pm\infty}|z(t)|<+\infty$, and capture orbits  are those for which $\limsup_{t\to -\infty}|z(t)|=+\infty$ and
$\limsup_{t\to +\infty}|z(t)|<+\infty$).  In his famous paper about the final evolutions in the three body problem, Chazy introduced the term   {\it  oscillatory motions} \cite{Chazy}, although he did not find examples of these, leaving the question of their existence open. Sitnikov's model yielded the first example of oscillatory motions \cite{Sitnikov}.
There are many relevant works on this problem, including \cite {Alekseev1968a,Alekseev1968b,Alekseev1969,McGehee,Mos,Robinson,Dankowicz,GarciaPerezChavela,GorodetskiKaloshin}.

The curved Sitnikov problem introduced in Section~\ref{intro_confined} is a modification of the classical problem when the  massless particle moves on a circle rather than a line.  Here  we regard  the circle  as  a very simple restricted model of a space with constant curvature. In Subsection~\ref{n_body_curvature}
we introduce and summarize some aspects of  this problem.


\subsection{The $n$-body problem in spaces with
{constant} curvature}\label{n_body_curvature}
The   $n$-body problem on spaces with
{constant} curvature is a natural extension of the  $n$-body problem  in the Euclidean space; in either case  the gravitational law considered is Newtonian.  The extension was first proposed independently by the founders of hyperbolic geometry,  Nikolai Lobachevsky and J\'anos Bolyai. It was subsequently studied in the late 19th, early 20th century,  by Serret, Killing, Lipschitz, Liebmann, Schering, etc. Schr\"{o}dinger developed a quantum mechanical analogue of the Kepler problem on the two-sphere in 1940. The interest  in the problem was revived  by     Kozlov, Harin,  Borisov,  Mamaev,  Kilin, Shchepetilov, Vozmischeva, and others, in the 1990's. A  more recent surge of interest  was stimulated by the works on relative equilibria in spaces with constant curvature (both positive and negative)  by Diacu, P\'erez-Chavela, Santoprete,  and others, starting in the 2010's.  See \cite{Diacu} for a history of the  problem and a comprehensive list of references.

A distinctive aspect   of the $n$-body problem on curved spaces is that the lack of (Galilean) translational invariance results in the lack of  center-of-mass and linear-momentum integrals.
Hence, the study of the motion cannot be reduced to a  barycentric coordinate system.

As a consequence, the two-body problem on a sphere  can no longer be reduced to the
corresponding problem of motion in a central potential field, as is the case for   the Kepler problem in the Euclidean space. As it turns out, the two-body problem on the sphere is not  integrable \cite{Ziglin}.

Studying the three-body problem on spaces with curvature is also challenging.
Perhaps  the simplest  model is the restricted  three-body problem  on a circle. This was studied in  \cite{Fr}. First, they consider the motion of the two primaries on the circle, which  is integrable, collisions can be regularized, and all orbits can be classified into three different classes (elliptic, hyperbolic, parabolic). Then  they consider the motion of the massless particle  under the gravity of the primaries,   when one or both primaries are at a fixed position. They obtain once again a complete classification of all orbits of the massless particle.

In this paper we take the ideas from above one step further, by considering the curved Sitnikov problem,  with the  massless particle moving on a circle under the gravitational influence of two primaries that move on Keplerian ellipses in a plane perpendicular to that circle.   In the limit case, when the primaries are identified with one point, that is when the primaries coalesce into a single body, the Keplerian ellipses degenerate to a point, and  the limit problem coincides with the two-body problem on a circle described above.

While the motivation of this work is theoretical, there are possible connections with the dynamics of planets in binary star systems. About 20 planets outside of the Solar System have been confirmed to orbit about binary stars systems; since more than half of the main sequence stars have at least one stellar companion, it is expected that a substantial fraction of planets form in binary stars systems. The orbital dynamics of such planets can vary widely, with some planets orbiting one star and some others orbiting both stars. Some chaotic-like planetary  orbits have also been observed, e.g.  planet Kepler-413b orbiting Kepler-413 A and Kepler-413 B in the constellation Cygnus, which displays erratic precession. This planet's orbit is tilted away from the plane of binaries and deviating  from Kepler's laws. It is hypothesized that this tilt may be due to the gravitational influence of   a third star nearby \cite{Kostov2014}. Of a related interest is the relativistic version of the Sitnikov problem \cite{Kovacs}.
Thus, mathematical models like the one considered in this paper could  be helpful to understand  possible types of planetary orbits in  binary stars systems.

To complete this introduction, the paper is organized as follows: In Section~\ref{curved} we go deeper in the description of the curved Sitnikov problem, studying the limit cases and its general properties. In  Section~\ref{general} we present a general result on stability interchanges. In Section~\ref{section_stability} we show that the equilibrium points in the curved Sitnikov problem present stability interchanges. Finally, in order to have a self contained paper, we add an Appendix with general results (without proofs) from Floquet theory.

\section{The curved Sitnikov problem}\label{curved}
\subsection{Description of the model}
We consider two bodies with equal masses (primaries) moving under mutual Newtonian gravity on identical elliptical orbits of eccentricity $\varepsilon$, about their center of mass. For small values of $\varepsilon$, the distance $r(t)$ from either primary to the center of mass of the binary is given by
\begin{equation}\begin{split}\label{primarias}
     r_{\varepsilon}(t;r)=& r\rho(t;\varepsilon), \qquad r>0,\\\rho(t;\varepsilon)=&\left(1-\varepsilon\cos (u(t))\right)= \left(1-\varepsilon\cos (t)\right)+\mathcal{O}(\varepsilon^{2}),\end{split}
\end{equation}
{where $u(t)$ is the eccentric anomaly, which satisfies Kepler's equation $u-\eps \sin u=(2\pi/\tau)t$, where $\tau/2\pi$ is the mean motion\footnote{The mean motion is the time-average angular velocity over an orbit.} of the primaries. The expansion in \eqref{primarias} is convergent for $\eps < \eps_c = 0.6627...$; see \cite{Plummer}.}

A massless particle is confined  on a circle of radius $R$ passing through the center of mass of the binary and perpendicular to the plane of its motion. We assume that the only force acting on the infinitesimal particle is  the  component along the circle of the resultant of the gravitational forces exerted by the primaries. The motion of the primaries take place in the $xy$-coordinate plane and the circle with radius $R$ is in the $yz$-coordinate plane. See Figure \ref{problema}.

   \begin{figure}
                    \begin{center}
                  \includegraphics[width=8cm]{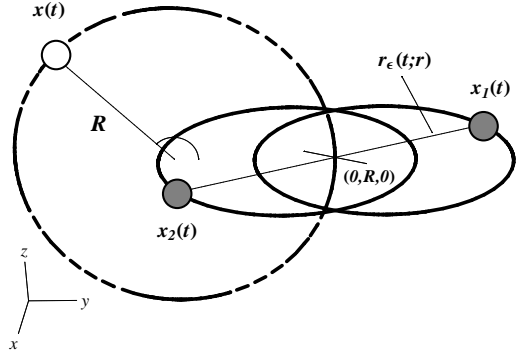}
                    \caption{\textit{The curved Sitnikov problem.}}
                    \label{problema}
                      \end{center}
   \end{figure}

  We place the center of mass at the point $(0,R,0)$ in the $xyz$-coordinate system. The position of the primaries are determined by the functions
  \begin{eqnarray*}
     \mathbf{x}_{1}(t)&=&(r_{\varepsilon}(t;r)\sin t ,R+r_{\varepsilon}(t;r)\cos t ,0),\\
     \mathbf{x}_{2}(t)&=&(-r_{\varepsilon}(t;r)\sin t ,R-r_{\varepsilon}(t;r)\cos t ,0).
  \end{eqnarray*}
{Note that $t=0$ corresponds to the passage of the primaries through the pericenter  at $y=R\pm r(1-\varepsilon)$ and  $t=\pi$ to the passage of the primaries through the apocenter at $y=R\pm r(1+\varepsilon)$; both peri- and apo-centers lie on the plane of the circle of radius $R$ in the $y$-axis.}

  The position of the infinitesimal particle is $\mathbf{x}(t)=(0,y(t),z(t))$ (taking into account the restriction of motion for the infinitesimal particle to the circle $y^{2}+z^{2}=R^{2}$). We will derive the equations of motion by computing the gravitational forces exerted by the primaries:
  \begin{eqnarray}\label{fuerza}
     \mathbf{F}_{\varepsilon}(y,t;R,r)&=&-\frac{\mathbf{x}-\mathbf{x}_{1}}{||\mathbf{x}-\mathbf{x}_{1}||^{3}}-
     \frac{\mathbf{x}-\mathbf{x}_{2}}{||\mathbf{x}-\mathbf{x}_{2}||^{3}}\\\nonumber
                  &=&-\frac{(-r_{\varepsilon}(t;r)\sin t ,y-R-r_{\varepsilon}(t;r)\cos t ,z)}
                  {||\mathbf{x}-\mathbf{x}_{1}||^{3}}\\\nonumber
                   &&-\frac{(r_{\varepsilon}(t;r)\sin t ,y-R+r_{\varepsilon}(t;r)\cos t ,z)}
                   {||\mathbf{x}-\mathbf{x}_{2}||^{3}}\,,
  \end{eqnarray}
  where the distance from the particle to each primary is
 \begin{eqnarray*}
     ||\mathbf{x}-\mathbf{x}_{1}||=
     \left[r^{2}_{\varepsilon}(t;r)+2R^{2}+2Rr_\eps(t;r)\cos t -2y(R+r_{\varepsilon}(t;r)\cos t )\right]^{1/2},\\
     ||\mathbf{x}-\mathbf{x}_{2}||=\left[r^{2}_{\varepsilon}(t;r) +2R^{2}-2Rr_\eps(t;r)\cos t -2y(R-r_{\varepsilon}(t;r)\cos t )\right]^{1/2}.
  \end{eqnarray*}

{We note that when  $r(1+\varepsilon)=2R$ the elliptical orbit of the primary with the  apo-center at $y<R$  crosses the circle of radius $R$, hence collisions between the primary and the infinitesimal mass are possible. Therefore we will restrict to $r<\frac{2R}{1+\varepsilon}$;  when $\varepsilon=0$, this means $r<2R$.
}

We write (\ref{fuerza}) in polar coordinates, that is $y=R\cos q$, $z=R\sin q$, and we obtain
  \begin{eqnarray}\label{fuerzaangulo}
     \mathbf{F}_{\varepsilon}(q,t;R,r)&=&-\frac{(-r_{\varepsilon}(t;r)\sin t ,R\cos q-R-r_{\varepsilon}(t;r)\cos t ,R\sin q)}{||\mathbf{x}-\mathbf{x}_{1}||^{3}}\\\nonumber
                   &&-\frac{(r_{\varepsilon}(t;r)\sin t ,R\cos q-R+r_{\varepsilon}(t;r)\cos t  ,R\sin q)}{||\mathbf{x}-\mathbf{x}_{2}||^{3}}.
                   \end{eqnarray}

  The origin  $q=0$ corresponds to the point $(0,R,0)$ in the $xyz$-coordinate system. Thus, the primaries move on elliptical orbits around this point.

  Next we will retain the component along the circle of the resulting force (\ref{fuerzaangulo}). That is, we will ignore the constraint force  that confines the motion of the particle to the circle, as this force  acts perpendicularly to the tangential component of the gravitational attraction force. The unit tangent vector to the circle of radius $R$ at $(0,R\cos( q),R\sin( q))$ pointing in the positive direction is given by $\mathbf{u}( q)=(0,-\sin( q),\cos( q))$. The component of the force $\mathbf{F}_{\varepsilon}( q,t;R,r)$ along the circle is computed as
  \begin{equation}\label{fuerzaangulorestringida}
     \mathbf{F}_{\varepsilon}( q,t;R,r)\cdot\mathbf{u}( q)=-\frac{(R+r_{\varepsilon}(t;r)\cos(t))\sin( q)}{||\mathbf{x}-\mathbf{x}_{1}||^{3}}-\frac{(R-r_{\varepsilon}(t;r)\cos(t))\sin( q)}{||\mathbf{x}-\mathbf{x}_{2}||^{3}}.
  \end{equation}

  The motion of the particle, as a Hamiltonian system of one-and-a-half degrees of freedom, corresponds to
  \begin{eqnarray}\label{sistemaestudio}
     \dot{ q}&=&p\,,\\\nonumber
     \dot{p}&=&f_{\varepsilon}(q,t;R,r),
  \end{eqnarray}
  where
  \begin{eqnarray*}
     f_{\varepsilon}( q,t;R,r)&:=&\mathbf{F}_{\varepsilon}( q,t;R,r)\cdot\mathbf{u}( q)\,,\\
     ||\mathbf{x}-\mathbf{x}_{1}||&=&\left[ r^{2}_{\varepsilon}(t;r) +2R(1-\cos  q)(R+r_{\varepsilon}(t;r)\cos t) \right]^{1/2}\,,\\
     ||\mathbf{x}-\mathbf{x}_{2}||&=&\left[ r^{2}_{\varepsilon}(t;r)
     +2R(1-\cos  q)(R-r_{\varepsilon}(t;r)\cos t)\right]^{1/2}\,,
  \end{eqnarray*}

Hence
\begin{equation}
H_\eps(q,p,t;R,r)=\frac{p^2}{2}+V_\eps(q,t;R,r),
\end{equation}
where the potential is given by  \begin{align} V_\eps(q,t;R,r)&= -\frac{1}{R}\left(\frac{1}{||\mathbf{x}-\mathbf{x}_{1}||}+\frac{1}{||\mathbf{x}-\mathbf{x}_{2}||}\right)
.\end{align}

\subsection{Limit cases}

The curved Sitnikov problem can be viewed as  a link between the classical Sitnikov problem and the Kepler problem on the circle, mentioned in  Section~\ref{sec:introduction}.

\subsubsection{The limit $R\rightarrow\infty$.}

   We express \eqref{fuerzaangulorestringida}  in terms of the arc length $w=R q$, obtaining

  \begin{multline}\label{fuerzalongituddearco}
      {f}_{\varepsilon}(w,t;R,r)=-\frac{(R+r_{\varepsilon}(t;r)\cos t )\sin\left(w/R\right)}
     {\left[r^{2}_{\varepsilon}(t;r) +2R(1-\cos \left(w/R\right))(R+r_{\varepsilon}(t;r)\cos t) \right]^{\frac{3}{2}}}\\
      -\frac{(R-r_{\varepsilon}(t;r)\cos t )\sin\left(w/R\right)}
      {\left[r^{2}_{\varepsilon}(t;r) +2R(1-\cos \left(w/R\right))(R-r_{\varepsilon}(t;r)\cos t) \right]^{\frac{3}{2}}}
  \end{multline}
  which we can write   in a suitable form as
  \begin{multline*}
      {f}_{\varepsilon}(w,t;R,r)=-\frac{w\frac{\sin\left(w/R\right)}{\left(w/R\right)}
     \left(1+\frac{r_{\varepsilon}(t;r)}{R}\cos t \right)}{
     \left[r^{2}_{\varepsilon}(t;r) +2w^2\frac{(1-\cos \left(w/R\right))}{\left ( w/R\right)^2}\left(1+\frac{r_{\varepsilon}(t;r)}{R}\cos t\right)  \right]^{\frac{3}{2}}}\\
      -\frac{w\frac{\sin\left(w/R\right)}{\left(w/R\right)}
     \left(1-\frac{r_{\varepsilon}(t;r)}{R}\cos t \right)}{
     \left[r^{2}_{\varepsilon}(t;r) +2w^2\frac{(1-\cos \left(w/R\right))}{\left ( w/R\right)^2}\left(1-\frac{r_{\varepsilon}(t;r)}{R}\cos t\right)  \right]^{\frac{3}{2}}}.
  \end{multline*}
  Letting $R$ tend to infinity we obtain
   \[
      \lim_{R\rightarrow\infty} {f}_{\varepsilon}(w,t;R,r)=-\frac{2w}{\left(r^{2}_{\varepsilon}(t;r)+w^{2}\right)^{3/2}}\,,
   \]
   which is the classical Sitnikov Problem.

\subsubsection{The limit $r\rightarrow0$.}\label{section_limit_0}

When we take the limit $r\rightarrow0$ in (\ref{fuerzaangulorestringida}) we are fusing the primaries into a large mass at the center of mass and we obtain a  two-body problem on the circle.
The component force along the circle corresponds to
\begin{equation}\label{eq:A1}
\lim_{r\rightarrow0}f_{\varepsilon}( q,t;R,r)=-\frac{\sin( q)}{\sqrt{2}R^{2}(1-\cos( q))^{3/2}}\,.
\end{equation}
This problem was studied in  \cite{Fr}   with a different  force given by
\begin{equation}\label{eq:A2}
 -\frac{1}{R q^{2}}+\frac{1}{R(2\pi- q)^{2}},
 \end{equation}
 the distance between the large mass and the particle is measured by the arc length (in that paper the authors assume that  $R=1$).
The potential of the force (\ref{eq:A1}) is
\begin{equation*}
         V_{1}(q_{1})=-\frac{1}{R^2\sqrt{2}(1-\cos (q_{1}))^{1/2}},
\end{equation*}
where $q_1$ denotes the angular coordinate, and the potential for (\ref{eq:A2}) is
      \begin{equation*}
         V_{2}(q_{2})=-\frac{1}{Rq_{2}}-\frac{1}{R(2\pi-q_{2})},
      \end{equation*}
where $q_2$ denotes the angular coordinate.
Each problem  defines an autonomous system with Hamiltonian
\begin{equation}\label{energia}
         H_{i}(p_{i},q_{i})=\frac{1}{2}p_{i}^{2}+V_{i}(q_{i})\,,
\end{equation}
taking $p_{1}=dq_{1}/dt$, $p_{2}=dq_{2}/dt$ and $i=1,2$.
Let $\phi^i_{t}$  be the   flow of the Hamiltonian  $H_{i}$, and let  $ A_i $ denote the phase space, $ i=1, \  2 $

{Using that all orbits are determined by the energy relations given by (\ref{energia}), it is not difficult to define a homeomorphism $g:A_1 \to A_2$ which maps orbits of system (\ref{eq:A1}) into orbits of system (\ref{eq:A2}). In the same way we can define a homeomorphism $h:A_2 \to A_1$ which in fact is
$g^{-1}$. This shows the $C^0$ equivalence of the respective flows.}
One can show that the two corresponding flows are $C^0$--equivalent.


We recall from \cite{Fr} that the solutions of the two-body problem on the circle (apart from the  equilibrium  antipodal  to the fixed body)  are  classified in three families (elliptic, parabolic and hyperbolic solutions) according to their energy level. The elliptic solutions come out of a collision, stop instantaneously, and reverse their path back to the collision with the fixed body. The parabolic  solutions come out of a collision and approach the equilibrium as $ t \rightarrow \infty $. Hyperbolic motions  comes out of a collision with the fixed body, traverse  the whole circle and return to a collision.

We remark that the two limit cases $R\to\infty$ and $r\to 0$ are not equivalent. Indeed,  in the case $r\to 0$ the resulting system is autonomous,
the point $q=0$ is a singularity for the system, and the  point $q=\pi$ is a hyperbolic fixed point, while in the case $R\to\infty$  the resulting system is non-autonomous (for $\eps\neq 0$),    the point $q=0$ is a fixed point of elliptic type, and the point $q=\infty$ is a degenerate hyperbolic periodic orbit.

\subsection{General properties}\label{generalproperties}

\subsubsection{Extended phase space, symmetries, and equilibrium points}
It is clear that, besides the limit cases $R\to\infty$ and $r\to 0$, the dynamics of the system depends only on the ratio $r/R$, so we can fix $R=1$ and study the  dependence of the global dynamics on $r$ where  $0<r<2$.
In this case using (\ref{sistemaestudio})  and (\ref{primarias}) we get
\begin{multline}\label{fuerzaparametrogamma}
      {f}_{\varepsilon}(q,t;r)=-\frac{(1+r\rho(t;\varepsilon)\cos(t))\sin( q)}
     {\left[r^{2}\rho^2(t;\varepsilon) +2(1-\cos  q)(1+r\rho(t;\varepsilon)\cos t)
     \right]^{3/2}}\\
     -\frac{(1-r\rho(t;\varepsilon)\cos(t))\sin( q)}{\left[r^{2}\rho^2(t;\varepsilon) +2(1-\cos  q)(1-r\rho(t;\varepsilon)\cos t)
     \right]^{3/2}}.
  \end{multline}

To study the non-autonomous system \eqref{sistemaestudio} we will make the system autonomous by
introducing the time as an extra dependent variable
\begin{equation}\label{eqn:autonomous}
     \mathcal{X}_{\varepsilon}(q,p,s;r)=\left\{
        \begin{array}{rcl}
           \dot{q}&=&p\\
           \dot{p}&=& {f}_{\varepsilon}(q,s;r)\\
           \dot{s}&=&1
        \end{array}\right..
   \end{equation}

This vector field is defined on $[0,2\pi]\times\mathbb{R}\times[0,2\pi]$, where we identify the boundary points of the closed intervals. The flow of $\mathcal{X}_ \varepsilon$ possesses symmetries defined by the functions
   \begin{eqnarray*}
          \mathbb{S}_{1}(q,p,s)&=&(-q,-p,s)\,,\\
          \mathbb{S}_{2}(q,p,s)&=&(q,p,s+2\pi)\,,\\
          \mathbb{S}_{3}(q,p,s)&=&(q+2\pi,p,s)\,,\\
          \mathbb{S}_{4}(q,p,s)&=&(q,-p,-s)\,
      \end{eqnarray*}
  in the sense that
         \begin{itemize}
         \item $\mathbb{S}_{1}(\mathcal{X}_{\varepsilon}(q,p,s))=\mathcal{X}_{\varepsilon}(\mathbb{S}_{1}(q,p,s))$,
         \item $\mathcal{X}_{\varepsilon}(q,p,s)=\mathcal{X}_{\varepsilon}(\mathbb{S}_{2}(q,p,s))$,
         \item $\mathcal{X}_{\varepsilon}(q,p,s)=\mathcal{X}_{\varepsilon}(\mathbb{S}_{3}(q,p,s))$
         \item $\mathbb{S}_{4}(\mathcal{X}_{\varepsilon}(q,p,s))=-\mathcal{X}_{\varepsilon}(\mathbb{S}_{4}(q,p,s))$,
      \end{itemize}
  as can be verified by a direct computation.

{The function $\mathbb{S}_{1}$ describes  the  symmetry respect to the trajectory $(0,0,s)$, $\mathbb{S}_{2}$ and $\mathbb{S}_{3}$  describe the bi-periodicity of  ${f}_{\varepsilon}(q,s;r)$ and $\mathbb{S}_{4}$ describes the reversibility of the system.}

System \eqref{sistemaestudio} has two equilibria  $(0,0)$ and $(\pi,0)$, which correspond to periodic orbits for $\mathcal{X}_{\varepsilon}$.

While the classical Sitnikov equation     is autonomous for $ \varepsilon = 0$, our equation \eqref{eqn:autonomous} is not, and thus we expect it to be non--integrable, as is borne out by numerical simulation.  Figure \ref{fig:poincare}  shows a Poincar\'e section corresponding to $s=0$ (mod $2\pi$), with $\varepsilon=0$ and $r=1$. This simulation suggests that the invariant KAM circles
coexist with chaotic regions.
   \begin{figure}
                    \begin{center}
                  \includegraphics[width=9.5cm]{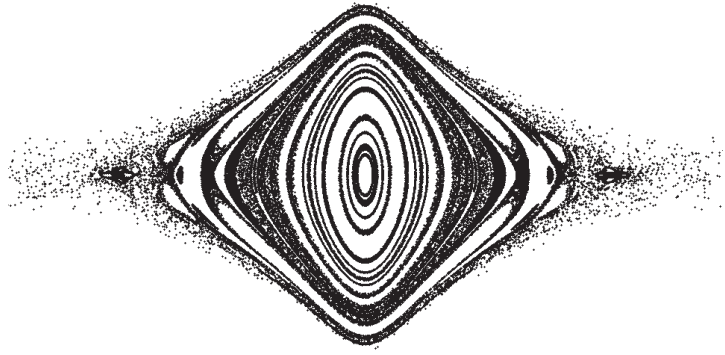}
                    \caption{\textit{Poincar\'e section for the curved Sitnikov problem, for $\varepsilon=0$ and $r=1$.}}
                    \label{fig:poincare}
                      \end{center}
   \end{figure}

{In the sequel, we will  analyze the dynamics around the  equilibrium points $(\pi,0)$ and $(0,0)$.  One important phenomenon that we will observe is that both equilibrium points  undergo stability interchanges as parameters are varied. More precisely, when 
{$\varepsilon$  sufficiently  small is kept fixed} and $r\rightarrow \frac{2R}{1+\varepsilon}$, the point $(\pi,0)$ undergoes infinitely many changes in stability, and when $r$ is kept fixed  and $\varepsilon\to 1$, the point $(0,0)$ undergoes infinitely many changes in stability.}

In the next section we will first prove a general result.

\section{A general result on stability interchanges}\label{general}
{In this section we switch to a  more general mechanical model which exhibits stability interchanges. We consider  the motion under mutual gravity of an infinitesimal  particle  and a heavy mass each constrained to its own curve and moving under gravitational attraction, and  study the linear stability of the equilibrium point corresponding to the closest position between the particles along the curves they are moving on. In Section~\ref{section_stability}, we will apply this general result to  the equilibrium point $P_1$ of the curved Sitnikov problem described in Section \ref{intro_confined}.  The fact that in the curved Sitnikov problem there are two heavy masses, rather than a single one as considered in this section, does not change the validity of the stability interchanges result, since, as we shall see,  what it ultimately matters is the time-periodic gravitational potential acting on the infinitesimal  particle.}  


To describe the setting of this section, consider   a particle   constrained to a curve $ {\bf x} = {\bf x} (s, \lambda ) $ in $ {\mathbb R}  ^3 $, where $s$ is the arc length along the curve and $\lambda$ is a parameter with values in some interval $ [0, \lambda_0] $, $\lambda_0> 0 $.
Another (much larger) gravitational mass undergoes a {\it  prescribed } periodic  motion according to  $ {\bf y} = {\bf y} (t, \lambda) =  {\bf y} (t+1, \lambda )$; see Figure  \ref{fig:generalcase_A}. We assume that the mass of the particle at $ {\bf x} (s) $ is negligible compared to the mass at  ${\bf y}(t) $, treating the particle at ${\bf x} (s) $ as  massless.

 \begin{figure}[t!]
	\begin{center}
	  \includegraphics[width=9.5cm]{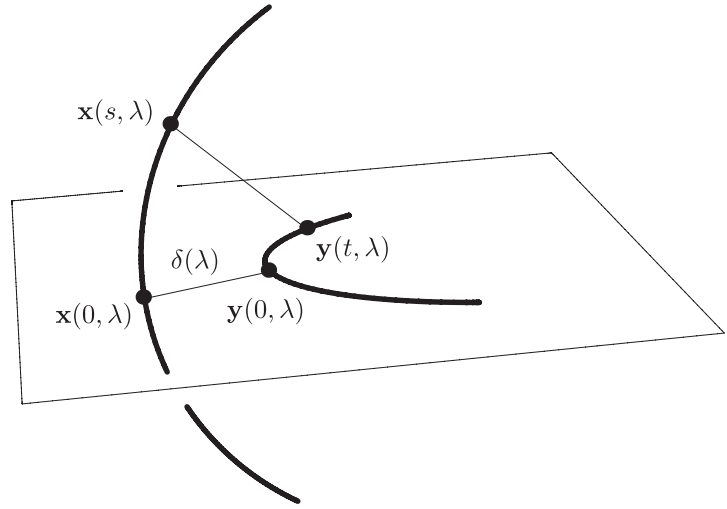}
	\caption{ An infinitesimal particle with coordinates ${\bf x}(s)$ is constrained to a curve and moves under the influence of the larger mass with coordinates ${\bf y}(t)$. }
	\label{fig:generalcase_A}
\end{center}
\end{figure}

To write the equation of motion for the unknown coordinate $s$ of the massless particle,  let:
\begin{equation}
	{\bf z} (s,t, \lambda ) = {\bf x} (s, \lambda) - {\bf y} (t, \lambda)
	\label{eq:q}.	
\end{equation}
Assume that $ s=0, \  t=0 $ minimize the distance between the two curves:
\begin{equation}
	 | {\bf z} (0,0, \lambda) | = \min_{s,t} |{\bf z} (s,t, \lambda)|\buildrel{def}\over{=} \delta( \lambda ),
	 \label{eq:min}
\end{equation}
for all $ \lambda \in [0, \lambda_0]$,  and that this minimum point is non-degenerate with respect to $t$, in the sense that \begin{equation}\frac{\partial^2 }{\partial t^2}|z(0,t,\lambda)|_{\mid t=0} \neq 0 .\label{eq:min02}
\end{equation}

Moreover, we make the following orthogonality assumption:
\begin{equation}\label{eqn:orthogonality}
\dot{\bf x}(0,\lambda)\cdot \dot{\bf y}(0,\lambda)=0,\, \textrm{ for all }\lambda\in[0,\lambda_0].
\end{equation}

{In the sequel we will study the case  when the minimum distance $\min_{s,t} |{\bf z} (s,t, \lambda)| \rightarrow 0 $, that is, the shortest distance from the orbit ${\bf y}(t,\lambda)$ of the massive body  to the curve ${\bf x}(s,\lambda)$
drawn by the infinitesimal mass approaches $0$ as $\lambda\to\lambda_0$. See Figure~\ref{fig:generalcase_A}.
In the curved Sitnikov problem, this corresponds to the case when $r\to \frac{2R}{1+\varepsilon}$ ($r\to 2R$ when 
{ $\varepsilon  = 0$).}
}

To write the equation of motion for $s$, we note that
 the Newtonian gravitational potential of the particle at $ {\bf x} (s) $ is a function of $s$ and $t$  given by
\begin{equation}
	U( s, t, \lambda  )  =- | {\bf z} (s,t, \lambda )  | ^{-1},
	\label{eq:U} 	
\end{equation}
and the evolution of $s$ is    governed by the Euler--Lagrange equation
$ \frac{d}{dt} L_{\dot s}-L_s=0 $ with the Lagrangian
\[
	L = \frac{1}{2} \dot s ^2 - U( s, t, \lambda  ),
\]
leading to\footnote{to explain this form of the Lagrangian, we note that  the equations for our massless particle are   obtained by taking the limit of the particle of small mass $m$; for such a particle, in the ambient potential $U$,  the Lagrangian is
$ \frac{1}{2} m \dot s ^2    -m U( s, t, \lambda  ) $ -- the factor $m$ in front of $U$ is due to the fact that $U$ is the potential energy of the unit mass. Dividing the Euler--Lagrange equation  by $m$ gives   (\ref{eq:governing.eq}).}
\begin{equation}
	 \ddot s + U ^\prime ( s , t,\lambda)  = 0, \  \textrm{ where }  ^\prime = \frac{\partial}{\partial s} .
	 \label{eq:governing.eq}
\end{equation}

Note that $s=0 $ is an equilibrium for any $\lambda$, since $ U ^\prime (0,t, \lambda ) = 0 $ for all $ t $ and for all $\lambda$, according to   (\ref{eq:min}).
Linearizing   (\ref{eq:governing.eq})
around the equilibrium $ s=0 $ we obtain
\begin{equation}
	 \ddot S+ a(t, \lambda  ) S = 0, \  \  a(t+1, \lambda ) = a (t, \lambda ),    	
	 \label{eq:lineq}
\end{equation}
where $a(t, \lambda  ) =  U ^{\prime\prime} (0,t,\lambda)$.

\medskip

We have the following general result:

{\begin{Theorem} \label{thm:thm1}
Assume that  \eqref{eq:min}, \eqref{eq:min02},
 \eqref{eqn:orthogonality}   hold, that $ {\bf x} $ and $ {\bf y} $ are both bounded in the $ C^2 $--norm uniformly in $\lambda$, and
\begin{equation}\label{eqn:A}\min_{s,t}\|x(s,\lambda)-y(t,\lambda)\|\to 0 \textrm{ as }\lambda\to 0.
\end{equation}
Then there exists an infinite sequence
\begin{equation}
	\lambda_1 > \lambda_2 \geq \lambda _3> \lambda  _4\geq \cdots >
	\lambda_ {2n-1}>\lambda_ {2n} \geq  \cdots	 \rightarrow 0
	\label{eq:sequence}
\end{equation}
such that  the equilibrium solution $ s=0 $ of   (\ref{eq:governing.eq})   is linearly  strongly stable\footnote{The equilibrium solution is linearly  strongly stable if the Floquet multipliers of   (\ref{eq:lineq})  lie on the unit circle and are not real, or equivalently, if the linearized system lies in the interior of the set of stable systems.} for all $ \lambda  \in (\lambda_{2n}, \lambda  _{2n-1}) $. Furthermore, each complementary $\lambda $--interval  contains points where the linearized equilibrium is not strongly stable, i.e. is either hyperbolic or  parabolic.
\end{Theorem}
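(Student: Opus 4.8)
The plan is to analyze the linearized equation (\ref{eq:lineq}) by Floquet theory (as recalled in the Appendix), to reduce strong stability to a statement about the monodromy matrix, and then to track a single scalar --- the rotation number of the associated projective flow --- as $\lambda\to 0$. Write $M(\lambda)\in SL(2,\R)$ for the monodromy matrix of (\ref{eq:lineq}) over one period; since (\ref{eq:lineq}) has no first--order term, the Wronskian is constant, so $\det M(\lambda)=1$ and the Floquet multipliers are the roots of $\mu^2-\Delta(\lambda)\mu+1=0$, where $\Delta(\lambda)=\operatorname{tr}M(\lambda)$. The equilibrium is strongly stable iff $|\Delta(\lambda)|<2$, parabolic iff $|\Delta(\lambda)|=2$, and hyperbolic iff $|\Delta(\lambda)|>2$; equivalently, it is \emph{not} strongly stable iff $M(\lambda)$ possesses a real invariant line. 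Passing to Pr\"ufer variables $S=\varrho\sin\theta$, $\dot S=\varrho\cos\theta$ converts (\ref{eq:lineq}) into $\theta'=\cos^2\theta+a(t,\lambda)\sin^2\theta$, a flow on $\mathbb{RP}^1=\R/\pi\Z$, and I would work with its rotation number $\rho(\lambda)$ (using the canonical lift supplied by the flow). Two standard facts then drive everything: $\rho$ depends continuously on $\lambda$, and $M(\lambda)$ has a real invariant line iff $\rho(\lambda)\in\Z$; thus strong stability is \emph{exactly} $\rho(\lambda)\notin\Z$.

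The heart of the argument is to show $\rho(\lambda)\to\infty$ as $\lambda\to 0$. Using (\ref{eq:min}) together with the fact that $s=0$ is an equilibrium for every $t$, I would compute $a(t,\lambda)=U''(0,t,\lambda)=|{\bf z}(0,t,\lambda)|^{-3}\bigl(1+{\bf z}(0,t,\lambda)\cdot{\bf x}''(0,\lambda)\bigr)$, where ${}'=\partial_s$ and ${\bf x}''$ is the curvature vector (bounded by the $C^2$ hypothesis). Expanding near $t=0$ and writing $\delta=\delta(\lambda)$ for the minimal distance, the non-degeneracy (\ref{eq:min02}) and orthogonality (\ref{eqn:orthogonality}) give $|{\bf z}(0,t,\lambda)|^2=\delta^2+\beta(\lambda)\,t^2+o(t^2)$ with $\beta(\lambda)\to|\dot{\bf y}(0)|^2$ bounded above and away from zero. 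Hence $a(t,\lambda)\sim(\delta^2+\beta t^2)^{-3/2}$ is a positive spike of height $\sim\delta^{-3}$ and width $\sim\delta$ concentrated at $t=0$, while it stays bounded elsewhere, since (\ref{eqn:A}) forces only the \emph{closest} approach to vanish. A Sturm comparison on the spike, where $a>0$, then yields at least $\tfrac1\pi\int\sqrt{a(t,\lambda)}\,dt\sim c\,\delta^{-1/2}$ zeros of every solution per period; since at each zero of $S$ one has $\theta\in\pi\Z$ and $\theta'=\cos^2\theta=1>0$, consecutive zeros advance $\theta$ by exactly $\pi$, so $\rho(\lambda)$ grows at least like $\delta(\lambda)^{-1/2}\to\infty$.

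With continuity, the trichotomy, and $\rho(\lambda)\to\infty$ in hand, the interchange structure follows cleanly. If the equilibrium were strongly stable on an entire interval $(0,\lambda^{*})$, then $\rho$ would be continuous and $\notin\Z$ there, hence trapped in a single interval $(k,k+1)$ and bounded, contradicting $\rho(\lambda)\to\infty$; thus non-strongly-stable parameters accumulate at $0$. Symmetrically, if $|\Delta|\ge 2$ on an entire interval $(0,\lambda^{**})$, then $\rho$ would be continuous and integer-valued, hence constant and bounded, again a contradiction; thus strongly stable parameters also accumulate at $0$. Since the strongly stable set $\{|\Delta|<2\}$ is open and both it and its complement accumulate at $0$, it must consist of infinitely many disjoint open intervals accumulating at $0$; labelling their endpoints produces the sequence (\ref{eq:sequence}), with strong stability on each $(\lambda_{2n},\lambda_{2n-1})$, while every complementary interval contains its endpoints, at which $|\Delta|=2$ (parabolic) or $|\Delta|>2$ (hyperbolic).

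The main obstacle is the estimate $\rho(\lambda)\to\infty$, and specifically making the spike analysis uniform in $\lambda$: one must control the expansion of $|{\bf z}(0,t,\lambda)|^2$ with constants independent of $\lambda$ (this is where the uniform $C^2$ bounds together with (\ref{eq:min02}) and (\ref{eqn:orthogonality}) are essential, guaranteeing that $\beta(\lambda)$ stays bounded away from $0$ and $\infty$), and then run the Sturm comparison over the \emph{shrinking} spike so that the zero count genuinely diverges rather than being swamped by the bounded contribution from the rest of the period. By contrast, the Floquet trichotomy and the continuity and integrality properties of the rotation number are standard (Appendix), and the topological bookkeeping of the final step is routine once both the strongly stable set and its complement are known to accumulate at $0$.
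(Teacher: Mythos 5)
Your proposal is correct and follows essentially the same route as the paper: the key estimate in both is that $a(t,\lambda)=U''(0,t,\lambda)$ develops a positive spike of height $\sim\delta^{-3}$ on a $t$--interval of width $\sim\delta$ around the closest approach (the paper's Lemma \ref{lem:uniformlimit}), which forces the phase of $x+i\dot x$ to wind by $\sim\delta^{-1/2}\to\infty$ over one period (Lemma \ref{lem:arg}). The only substantive difference is that the paper converts this divergent winding into the interchange structure by invoking Lemma \ref{lem:arginf} from \cite{Levi} as a black box, whereas you prove that step directly via continuity and integrality of the rotation number of the projective Pr\"ufer flow --- an equivalent, self-contained substitute.
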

}

The proof of  Theorem \ref{thm:thm1} relies on Lemmas \ref{lem:arginf}, \ref{lem:arg} and \ref{lem:uniformlimit} stated below.

 \begin{Lemma} \label{lem:arginf} Consider the linear system
 \begin{equation}
	 \ddot x + a(t, \lambda ) x=0,
	\label{eq:hill1}
\end{equation}
where $a(t,\lambda)$ is a continuous function  of $ t\in [0,1] $, $ \lambda \in (0, \bar \lambda] $, where $ \bar \lambda > 0 $.  Let $ z(t, \lambda) = x+ i \dot x $, where $x=x(t, \lambda ) $ is    a nontrivial solution of   (\ref{eq:hill1}). Assume that there exists an
  interval $ [t_0( \lambda ), t_1( \lambda ) ]\subset [0,1]$, possibly depending on $\lambda$, such that
\begin{equation}
	\lim_{ \lambda \downarrow 0} \arg z(t, \lambda)\biggl|_{t_0}^{t_1} \rightarrow -\infty.
	\label{eq:arginf}
\end{equation}
Here $\arg z(t, \lambda )$ is defined as a continuous function of $t$. Although this choice of $\arg$ is unique only modulo $ 2 \pi $,
its increment as stated in equation (\ref{eq:arginf}) is uniquely defined.

Then there exists a sequence  $\{ \lambda_k\}_{k=0}^ \infty$ satisfying (\ref{eq:sequence})  such that the Floquet matrix of   (\ref{eq:lineq})  is strongly stable
for all  $\lambda  \in (\lambda_{2n}, \lambda  _{2n-1}) $, for any $ n > 0 $. Furthermore, every complementary $\lambda$--interval contains
values of $\lambda$ for which the Floquet matrix is not strongly stable.
\end{Lemma}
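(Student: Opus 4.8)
\emph{Plan of proof.} The plan is to read the stability type of the Floquet (monodromy) matrix of \eqref{eq:hill1} off the rotation of solutions in the phase plane, and then to exploit the divergence hypothesis \eqref{eq:arginf} to force infinitely many stability alternations. First I would set up the phase-plane (Pr\"ufer) picture: writing $z=x+i\dot x=\rho e^{i\theta}$ for a nontrivial solution $x(t,\lambda)$ of \eqref{eq:hill1}, a direct computation gives $\dot\theta=-(\dot x^2+a(t,\lambda)x^2)/(x^2+\dot x^2)$, so that $\dot\theta=-1$ whenever $x=0$ and $|\dot\theta|\le\max(1,|a|)$. Two remarks make \eqref{eq:arginf} usable. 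First, the divergence is independent of the chosen solution: any two nontrivial solutions have phase vectors $(x,\dot x)$ that are never parallel (their Wronskian is a nonzero constant), so their continuous arguments differ by less than $\pi$ for all $t$, and hence their rotation increments over $[t_0,t_1]$ differ by at most $\pi$. Second, the rotation over the complementary set $[0,1]\setminus[t_0,t_1]$ stays bounded uniformly in $\lambda$, since there $a(t,\lambda)$ remains bounded (the close approach responsible for the divergence occurs inside $[t_0,t_1]$) and $|\dot\theta|\le\max(1,|a|)$. Combining these with \eqref{eq:arginf}, the full-period rotation
\[
\Theta(\lambda):=\arg z(t,\lambda)\big|_{0}^{1}
\]
satisfies $\Theta(\lambda)\to-\infty$ as $\lambda\downarrow0$, for every nontrivial solution.

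Next I would translate $\Theta$ into the trace of the monodromy matrix. Let $\Phi(t,\lambda)$ be the fundamental solution with $\Phi(0,\lambda)=I$ and $M(\lambda)=\Phi(1,\lambda)\in SL(2,\R)$, the Floquet matrix of \eqref{eq:lineq}. Since $(t,\lambda)\mapsto\Phi(t,\lambda)$ is continuous with $\Phi(0,\lambda)=I$, it lifts continuously to the universal cover of $SL(2,\R)$, and the lifted $SO(2)$--angle of the endpoint, $\tilde\phi(\lambda)$, is continuous and agrees with the accumulated rotation $\Theta(\lambda)$ up to a bounded error; hence $\tilde\phi(\lambda)\to-\infty$. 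The key algebraic fact is the polar (Iwasawa) decomposition $M=R_{\tilde\phi}\exp(B)$, with $B$ symmetric and traceless, $\|B\|=\beta$, for which one computes
\[
\mathrm{tr}\,M=2\cosh\beta\,\cos\tilde\phi .
\]
From this identity I read off two facts: whenever $\tilde\phi(\lambda)\in\pi\Z$ we have $|\mathrm{tr}\,M|=2\cosh\beta\ge2$, so $M(\lambda)$ is \emph{not} strongly stable; and whenever $\tilde\phi(\lambda)\in \tfrac{\pi}{2}+\pi\Z$ we have $\mathrm{tr}\,M=0$, so $M(\lambda)$ \emph{is} strongly stable.

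Finally I would build the sequence \eqref{eq:sequence}. Because $\tilde\phi$ is continuous with $\tilde\phi(\lambda)\to-\infty$, it crosses every level $k\pi$ and every level $(k+\tfrac12)\pi$ for all sufficiently negative $k$; by the two facts above these crossings produce, accumulating at $\lambda=0$, infinitely many parameters at which $M$ is not strongly stable and infinitely many at which it is. Since strong stability ($|\mathrm{tr}\,M|<2$) is an open condition, the set $\mathcal S=\{\lambda:\ M(\lambda)\text{ strongly stable}\}$ is open. I would take the connected components of $\mathcal S$ that meet the trace--zero crossings, order them by decreasing upper endpoint, and write the $n$-th such component as $(\lambda_{2n},\lambda_{2n-1})$. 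Each is nonempty (so $\lambda_{2n-1}>\lambda_{2n}$) and maximal, so its endpoints are non--strongly--stable, while the complementary intervals $(\lambda_{2n+1},\lambda_{2n})$ (possibly degenerate, whence $\lambda_{2n}\ge\lambda_{2n+1}$) each contain a level-$k\pi$ crossing, i.e. a non--strongly--stable parameter. This is exactly the alternation and the inequality pattern of \eqref{eq:sequence}.

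The main obstacle is the passage carried out in the first two steps: rigorously identifying the phase rotation of solutions of \eqref{eq:hill1} with the lifted $SO(2)$--angle $\tilde\phi(\lambda)$ of the endpoint monodromy, so that the clean trace identity applies, together with the reduction of the sub-interval hypothesis \eqref{eq:arginf} to divergence of the full-period rotation, which relies on controlling the rotation off the close-approach interval. Once $\tilde\phi(\lambda)\to-\infty$ is established, the identity $\mathrm{tr}\,M=2\cosh\beta\cos\tilde\phi$ makes the remaining construction essentially topological.
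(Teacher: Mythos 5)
The paper does not actually prove this lemma: it states it and refers to Levi's inverted--pendulum paper \cite{Levi} for the proof. Your plan is, in essence, a reconstruction of that argument --- the lifted rotation angle of the monodromy path in $SL(2,\R)$ diverges, and the polar--decomposition identity $\mathrm{tr}\,M=2\cosh\beta\,\cos\tilde\phi$ (equivalently, Levi's picture of the strongly stable region in the universal cover) forces alternation between elliptic behaviour at $\tilde\phi\in\tfrac{\pi}{2}+\pi\Z$ and non--strongly--stable behaviour at $\tilde\phi\in\pi\Z$. The Pr\"ufer computation, the Wronskian argument making the winding solution--independent up to $\pi$, the trace identity, and the intermediate--value argument are all correct; the identification of $\Theta(\lambda)$ with $\tilde\phi(\lambda)$ up to bounded error that you flag as the main obstacle is in fact routine, since $\Phi=R_{\phi}P$ with $P$ symmetric positive definite gives $\langle Pv_0,v_0\rangle>0$ and hence $\arg(\Phi v_0)=\phi+\arg v_0$ up to an error below $\pi/2$. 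The final enumeration needs a small adjustment: rather than listing \emph{all} components of the strongly stable set (there could be infinitely many above a fixed $\lambda^*>0$, spoiling both the ordering and the convergence $\lambda_k\to0$), select one crossing of each level $(k+\tfrac12)\pi$ with $k$ strictly decreasing and take the component of the stable set containing it; consecutive selected components are then separated by an integer--level crossing and their endpoints tend to $0$.

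The one genuine flaw is your reduction of the sub--interval hypothesis \eqref{eq:arginf} to divergence of the full--period rotation $\Theta(\lambda)$. You bound the rotation on $[0,1]\setminus[t_0,t_1]$ by asserting that $a(t,\lambda)$ is uniformly bounded there. Nothing in the hypotheses of the lemma grants this, and it is false in the intended application: with $\tau(\lambda)=c\delta$ as in Lemma \ref{lem:uniformlimit}, the coefficient $U''(0,t,\lambda)$ is still of order $\delta^{-3}$ for $|t|$ slightly larger than $\tau(\lambda)$, so $a$ blows up on part of the complement as well. The standard repair uses the structure of the equation rather than a bound on $a$: since $\dot\theta=-(\dot x^2+a x^2)/(x^2+\dot x^2)$ equals $-1$ whenever $x=0$, the phase point crosses the levels $\theta\in\tfrac{\pi}{2}+\pi\Z$ only downward, whence $\theta(t)-\theta(s)<\pi$ for all $t>s$. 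Thus the rotation over each of $[0,t_0]$ and $[t_1,1]$ is bounded \emph{above} by $\pi$ (it may be arbitrarily negative, which only helps), and
\begin{equation*}
\Theta(\lambda)=\arg z\bigl|_{0}^{1}\;<\;\arg z\bigl|_{t_0}^{t_1}+2\pi\;\longrightarrow\;-\infty .
\end{equation*}
With this substitution the rest of your plan goes through.
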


Proof of this lemma can be found in \cite{Levi}.

 \begin{Lemma} \label{lem:arg}
Consider the linear system
\begin{equation}
	 \ddot x + a(t) x=0,
	\label{eq:hill}
\end{equation}    and assume that $ a(t) > 0 $ on some interval
$ [t_0, t_1] $.  For any (nontrivial) solution $ x(t) $ the corresponding  phase vector
$ z(t ) = x + i \dot x $ rotates by
\begin{equation}
	\theta [z]\buildrel{def}\over{=} \arg z(t) \biggl|_{t_0}^{t_1}\leq -\min_{[t_0,t_1] } \sqrt{ a(t) } (t_1-t_0) + \pi.
	\label{eq:argestimate}
\end{equation}
\end{Lemma}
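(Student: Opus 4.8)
\emph{Proof idea.} The plan is to pass to polar (Prüfer) coordinates in the phase plane, because the argument of $z = x + i\dot x$ itself does \emph{not} rotate at the natural frequency $\sqrt{a(t)}$: a direct computation gives $\frac{\di}{\di t}\arg z = -\dfrac{\dot x^2 + a x^2}{x^2 + \dot x^2}$, whose magnitude is comparable to $\min(1,a)$ rather than to $\sqrt{a}$, so one cannot read off the desired rate pointwise. To recover the frequency $\sqrt{a}$ I would instead introduce the Prüfer angle $\phi$ by setting $x = r\sin\phi$, $\dot x = r\cos\phi$, with $r = \sqrt{x^2 + \dot x^2} > 0$; here $r$ never vanishes for a nontrivial solution, by uniqueness for \eqref{eq:hill}, so $\phi$ is a well-defined continuous function of $t$.

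A short computation (differentiating the two defining relations and eliminating $\dot r$) then gives the scalar equation $\dot\phi = \cos^2\phi + a(t)\sin^2\phi$. Since $z = r(\sin\phi + i\cos\phi) = r\, e^{i(\pi/2 - \phi)}$, we have $\arg z = \tfrac{\pi}{2} - \phi$ up to an additive constant, so that $\theta[z] = -\big(\phi(t_1) - \phi(t_0)\big) =: -\Delta\phi$; thus the claimed upper bound on $\theta[z]$ is equivalent to a \emph{lower} bound $\Delta\phi \ge m(t_1 - t_0) - \pi$, where $m := \min_{[t_0,t_1]}\sqrt{a} > 0$ (positive and attained, since $a$ is continuous and positive on the compact interval). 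First I would use $a \ge m^2$ to obtain $\dot\phi \ge \cos^2\phi + m^2\sin^2\phi > 0$, and then change the variable of integration from $t$ to $\phi$:
\[
t_1 - t_0 = \int_{\phi(t_0)}^{\phi(t_1)} \frac{\di\phi}{\dot\phi} \le \int_{\phi(t_0)}^{\phi(t_1)} \frac{\di\phi}{\cos^2\phi + m^2\sin^2\phi}.
\]

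Finally I would estimate the last integral using that its integrand is $\pi$-periodic with $\int_0^\pi \frac{\di\phi}{\cos^2\phi + m^2\sin^2\phi} = \pi/m$. Writing $\Delta\phi = k\pi + s$ with $k = \lfloor \Delta\phi/\pi\rfloor$ and $0 \le s < \pi$, the $k$ full periods contribute exactly $k\pi/m$ while the leftover piece of length $s$ contributes at most one further $\pi/m$, so $t_1 - t_0 \le (k+1)\pi/m \le (\Delta\phi + \pi)/m$. Rearranging yields $\Delta\phi \ge m(t_1 - t_0) - \pi$, hence $\theta[z] = -\Delta\phi \le -m(t_1 - t_0) + \pi$, which is precisely \eqref{eq:argestimate}. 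The main obstacle is conceptual rather than computational: one has to recognize that the correct rotating quantity is the Prüfer angle (equivalently, the argument in the rescaled coordinates $(\sqrt{a}\,x,\dot x)$), not $\arg z$ directly, in order to see the frequency $\sqrt{a}$; the remaining delicate point is controlling the single incomplete half-revolution at the ends of the interval, which is exactly what accounts for the additive $+\pi$ slack. Everything else reduces to a monotone differential inequality and an elementary periodic-integral estimate.
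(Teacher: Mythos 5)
Your proof is correct, and it starts from the same place as the paper's: passing to a polar (Pr\"ufer) angle reduces (\ref{eq:hill}) to the scalar equation $\dot\phi=\cos^2\phi+a(t)\sin^2\phi$ (equivalently the paper's $\dot\theta=-(a\cos^2\theta+\sin^2\theta)$ for $\theta=\arg(x+i\dot x)$, since your $\phi$ is $\pi/2-\theta$), after which $a$ is replaced by its minimum $a_m=m^2$. Where you genuinely diverge is in extracting the winding rate $m=\sqrt{a_m}$ from the frozen equation. The paper invokes the ODE comparison theorem against the solution $\bar\theta$ of the autonomous equation, identifies $\bar\theta$ with the argument of the explicit harmonic solution $A\cos(\sqrt{a_m}\,t-\varphi)$, and bounds the discrepancy between $\arg\bigl(\cos u - i\sqrt{a_m}\sin u\bigr)$ and $-u$ by $\pi/2$ at each endpoint via a same-quadrant observation, accumulating the additive $\pi$. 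You instead use $\dot\phi>0$ to invert the relation, write $t_1-t_0=\int d\phi/\dot\phi$, bound the integrand by $\bigl(\cos^2\phi+m^2\sin^2\phi\bigr)^{-1}$, and exploit the exact period integral $\int_0^\pi\bigl(\cos^2\phi+m^2\sin^2\phi\bigr)^{-1}d\phi=\pi/m$ together with positivity and $\pi$-periodicity to control the single incomplete half-revolution; this produces the same $+\pi$ slack. Your route is somewhat more self-contained --- no comparison theorem and no matching to an explicit solution --- and it makes transparent that $\pi/m$ is exactly the time per half-turn of the frozen system, i.e., that $\sqrt{a}$ is the correct \emph{averaged} rotation rate even though, as you rightly note at the outset, the pointwise rate of $\arg z$ is only comparable to $\min(1,a)$. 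Both arguments deliver precisely (\ref{eq:argestimate}).
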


\begin{proof}
Writing the differential equation $ \ddot x + a (t) x = 0 $ as a system $ \dot x = y, \  \dot y  = - a(t)x $, we obtain  (using complex notation): 
\[
	\dot \theta =  \frac{d}{dt} \arg (x+iy)= - \frac{d}{dt}  \hbox{Im} ( \ln z) = - \hbox{Im} \frac{\dot z  }{z} =
	-\hbox{Im} \frac{(y-iax)(x-iy)}{x ^2 + y ^2 } =  -(a \cos^2  \theta+ \sin ^2 \theta ).
\]
We conclude that for any solution of   (\ref{eq:hill}), the angle   $\theta=\arg (x+iy)$ satisfies
\[
	\dot \theta \leq  -(a_m\cos ^2 \theta + \sin^2 \theta ), \  \  \hbox{for}  \  \   t\in [t_0,t_1] , \  \  \hbox{where}  \  \   a_m=\min_{[t_0,t_1] }a(t).
\]
To invoke comparison estimates, consider $ \bar\theta (t) $ which satisfies
\begin{equation}
		\frac{d}{dt} \bar   \theta =  -(a_m \cos^2 \bar \theta + \sin^2 \bar\theta ), \  \  \bar \theta (t_0) = \theta (t_0).
		\label{eq:thetabar}
\end{equation}
By the comparison estimate, we conclude:
\begin{equation}
	 \theta \biggl|_{t_0}^{t_1}\leq   \bar\theta \biggl|_{t_0}^{t_1},
	\label{eq:comparison}
\end{equation}
 and the proof of the lemma will be complete once we show that $ \bar \theta $ satisfies the estimate   (\ref{eq:argestimate}).
To that end we  consider a solution of
\[
	\ddot {\bar x } + a_m \bar x = 0
\]
with the initial condition satisfying
\begin{equation}
	\arg(\bar x(t_0)+i\dot   {\bar x}(t_0)) = \bar \theta (t_0).
	\label{eq:ic}
\end{equation}    This  solution is of the form
\[
	{\bar x}(t)= A\cos  (\sqrt{ a_m} t- \varphi ), \  \  A ={\rm const.} ,
\]
where $\varphi$ is chosen so as to satisfy   (\ref{eq:ic}).

Since $ \arg( \bar x+ i\dot   {\bar x}) $ satisfies
the same differential equation as $ \bar \theta $, and since the  initial conditions match, we conclude that
$ \bar \theta(t)  = \arg( \bar x+ i\dot   {\bar x})$, so that\begin{equation}
	\bar \theta(t)  =
	\arg( \cos  (\sqrt{ a_m }  t- \varphi) - i\sqrt{  a_m }\sin (\sqrt{  a_m } t-  \varphi ) )=
	- (\sqrt{a_m} t- \varphi )+ \widehat{ \pi /2},
	\label{eq:barest}
\end{equation}
where $ \widehat X $ denotes a quantity whose absolute value does not exceed $X$.  In other words, $ \bar \theta(t) $ is given by a linear function with coefficient $ -  \sqrt{ a_m} $, up to an error
$ < \pi /2$.
The last inequality is due to the fact that the complex numbers $\cos(\sqrt{a_m}t-\phi)-i\sin(\sqrt{a_m}t-\phi)$ and
$\cos(\sqrt{a_m}t-\phi)-i\sqrt{a_m}t\sin(\sqrt{a_m}t-\phi)$ lie in the same quadrant, so the difference between their arguments is no more than $\pi/2$.

Therefore, over the interval $ [t_0,t_1] $ the function $ \bar \theta $ changes by  the amount $ \sqrt{ a_m}(t_1-t_0) $ with the error of at most $ \frac{\pi}{2} + \frac{\pi}{2} = \pi$:
\begin{equation}
	\bar \theta  \biggl|_{t_0}^{t_1}\leq -\sqrt{  a_m } (t_1-t_0) + \pi;
	\label{eq:barest1}
\end{equation}
restating this more formally,   (\ref{eq:barest})  implies
\[
	\bar \theta(t_1)<  -  (\sqrt{ a_m} t_1- \varphi)+ \frac{\pi}{2} , \  \  \  \bar \theta(t_0)>  -  (\sqrt{ a_m}t_0- \varphi)- \frac{\pi}{2}.
\]
Subtracting the second inequality from the first gives   (\ref{eq:barest}).

Substituting  (\ref{eq:barest1}) into  (\ref{eq:comparison}) yields (\ref{eq:argestimate})  and completes the proof of  Lemma \ref{lem:arg}. \hfill $\diamondsuit$
\end{proof}

\begin{Lemma} \label{lem:uniformlimit}
Consider the potential $U$ defined by equation (\ref{eq:U}).    Assume that the functions
${\bf x}={\bf x}(s,\lambda)$, ${\bf y}={\bf y}(t,\lambda)$ satisfy
\begin{itemize}
\item[(i)]   the minimum $\min_{s,t} |{\bf z}(\cdot,\cdot,\lambda) |=\delta(\lambda)$ is non-degenerate and   is achieved at $s=t=0$, where ${\bf z}(s,t,\lambda)={\bf x}(s,\lambda)-{\bf y}(t,\lambda)$,

\item[(ii)]  $||{\bf z}(\cdot,\cdot,\lambda) ||_{C^2} \leq M $  uniformly in 
{$0<\lambda < \lambda_0$}, and

\item[(iii)] $\delta ( \lambda ) \rightarrow 0$  as $ \lambda \rightarrow 0 $.
\end{itemize}

Then there exists time $\tau= \tau (\lambda)$ (approaching zero as $\lambda \rightarrow 0$) such that

\begin{equation}
	\lim_{ \lambda \rightarrow 0} \left(\tau(\lambda) \cdot \min_{|t|\leq\tau (\lambda)}  \sqrt{U ^{\prime\prime} (0,t,\lambda)} \right)	\rightarrow \infty.
	\label{eq:lemma4}
\end{equation}
\end{Lemma}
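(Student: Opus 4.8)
The plan is to obtain an explicit formula for the coefficient $a(t,\lambda)=U''(0,t,\lambda)$ of the linearized equation \eqref{eq:lineq} and then bound it from below by a constant multiple of $\rho(0,t,\lambda)^{-3}$, where $\rho=|{\bf z}|$; the singular growth of this quantity as the minimal distance $\delta(\lambda)$ collapses is what ultimately forces \eqref{eq:lemma4}. Writing ${\bf x}_s=\partial{\bf x}/\partial s$ and ${\bf x}_{ss}=\partial^2{\bf x}/\partial s^2$ for the arc-length derivatives (so $|{\bf x}_s|\equiv 1$), and differentiating $U=-\rho^{-1}$ from \eqref{eq:U} twice in $s$, one finds
\[
U''(s,t,\lambda)=\frac{1+{\bf z}\cdot{\bf x}_{ss}}{\rho^3}-\frac{3({\bf z}\cdot{\bf x}_s)^2}{\rho^5}.
\]
At $s=0$ the equilibrium property $U'(0,t,\lambda)=0$, which holds for every $t$ (as noted after \eqref{eq:governing.eq}), is exactly the statement ${\bf z}(0,t,\lambda)\cdot{\bf x}_s(0,\lambda)=0$, so the last term drops out and
\[
a(t,\lambda)=U''(0,t,\lambda)=\frac{1+{\bf z}(0,t,\lambda)\cdot{\bf x}_{ss}(0,\lambda)}{\rho(0,t,\lambda)^3}.
\]
First I would record this identity: it is the structural reason the coefficient is governed by the reciprocal cube of the distance.

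Next I would extract a clean lower bound for $a$. The $C^2$ bound (ii) gives $|{\bf x}_{ss}(0,\lambda)|\le M$, hence $|{\bf z}(0,t,\lambda)\cdot{\bf x}_{ss}(0,\lambda)|\le M\rho(0,t,\lambda)$; once $\rho(0,t,\lambda)\le 1/(2M)$ the numerator exceeds $\tfrac12$ and therefore $a(t,\lambda)\ge \tfrac{1}{2}\rho(0,t,\lambda)^{-3}$. To make this uniform on a small $t$-window I would control how fast $\rho(0,t,\lambda)$ departs from its minimum $\delta(\lambda)$. Since $\partial_t\rho=({\bf z}\cdot{\bf z}_t)/\rho$, the Cauchy--Schwarz inequality yields the Lipschitz estimate $|\partial_t\rho(0,t,\lambda)|\le|{\bf z}_t|\le M$, uniformly in $t$ and $\lambda$, so by the fundamental theorem of calculus $\rho(0,t,\lambda)\le \delta(\lambda)+M|t|$.

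With these two ingredients the choice of $\tau$ is essentially forced. Taking $\tau(\lambda)=\delta(\lambda)/M$ gives $\rho(0,t,\lambda)\le 2\delta(\lambda)$ for all $|t|\le\tau(\lambda)$, so (for $\lambda$ small enough that $2\delta(\lambda)\le 1/(2M)$) the uniform bound $a(t,\lambda)\ge \tfrac{1}{2}(2\delta)^{-3}=\tfrac{1}{16}\delta^{-3}$ holds on the whole window. Consequently
\[
\tau(\lambda)\cdot\min_{|t|\le\tau(\lambda)}\sqrt{a(t,\lambda)}\;\ge\;\frac{\delta(\lambda)/M}{4\,\delta(\lambda)^{3/2}}=\frac{1}{4M\sqrt{\delta(\lambda)}}\;\longrightarrow\;\infty
\]
as $\lambda\to 0$ by (iii), while $\tau(\lambda)=\delta(\lambda)/M\to 0$, as required; this establishes \eqref{eq:lemma4}. (Note that this argument uses neither the non-degeneracy \eqref{eq:min02} nor the orthogonality \eqref{eqn:orthogonality}, which serve other parts of the development.)

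The step I expect to need the most care is precisely the uniform control of $\rho(0,t,\lambda)$ near its minimum. The naive route — a second-order Taylor expansion in $t$ — is not uniform: the flyby geometry makes $\partial_t^2\rho(0,0,\lambda)$ blow up like $|\dot{\bf y}(0,\lambda)|^2/\delta(\lambda)$ as $\delta\to 0$, so the quadratic coefficient is unbounded and a bounded-curvature expansion would give a false estimate. Passing instead to the first-order Lipschitz bound $|\partial_t\rho|\le M$, which is uniform exactly because Cauchy--Schwarz tames the offending $1/\rho$ factor, is what makes the argument go through; balancing $\tau$ so that simultaneously $\tau\to 0$ and $\tau\,\delta^{-3/2}\to\infty$ is then the quantitative heart of the estimate.
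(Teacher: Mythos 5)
Your argument is correct and its skeleton coincides with the paper's: both compute $U''(0,t,\lambda)$ from \eqref{eq:U}, use $|{\bf z}'|=1$ (arc length) and the $C^2$ bound to force the numerator above a fixed positive constant once $\rho$ is small, bound $\rho(0,t,\lambda)\le 2\delta$ on a window $|t|\le\tau\sim\delta$, and conclude $\tau\sqrt{a}\gtrsim\delta^{-1/2}\to\infty$. The one substantive divergence is how the cross term $3({\bf z}\cdot{\bf z}')^2/\rho^5$ is handled. You discard it entirely by invoking ${\bf z}(0,t,\lambda)\cdot{\bf x}_s(0,\lambda)=0$ for \emph{all} $t$, i.e.\ the equilibrium identity $U'(0,t,\lambda)\equiv 0$. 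That identity is asserted in the setup after \eqref{eq:governing.eq} (and holds in the Sitnikov application by symmetry), but it is not among the hypotheses (i)--(iii) of the lemma, and it does not follow from \eqref{eq:min} alone: the joint minimum only gives ${\bf z}\cdot{\bf z}'=0$ at $(s,t)=(0,0)$, and even adding \eqref{eqn:orthogonality} only kills the first $t$-derivative of ${\bf z}(0,t)\cdot{\bf x}_s(0)$ at $t=0$, not the function itself. The paper's proof is self-contained on exactly this point: it Taylor-expands ${\bf z}\cdot{\bf z}'|_{s=0}$ in $t$ to get $|{\bf z}\cdot{\bf z}'|\le kt$ (using only that it vanishes at $t=0$ and the $C^2$ bound), and then absorbs the resulting $3k^2t^2$ into the numerator by the same restriction $|t|\le c\delta$ you already impose. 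So your proof is fine as written inside the framework of Section 3, but if you want the lemma to stand on its stated hypotheses you should replace ``the term drops out'' by the $O(t^2)$ bound, which costs nothing in the final scaling. Your closing remark about the non-uniformity of a second-order Taylor expansion of $\rho$ in $t$ is well taken, but note the paper sidesteps it by expanding $\rho^2={\bf z}\cdot{\bf z}$ rather than $\rho$; your first-order Lipschitz bound $|\partial_t\rho|\le M$ is an equally valid (and slightly slicker) alternative.
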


\begin{proof}
Differentiating   (\ref{eq:U})  with respect to $s$ twice, we get
\begin{equation}
	U^{\prime \prime} (0,t, \lambda )   = \left[\frac{({\bf z}^\prime  \cdot {\bf z}^\prime + {\bf z} \cdot {\bf z}  ^{\prime\prime})
	({\bf z} \cdot {\bf z} )  -3({\bf z} \cdot {\bf z} ^\prime )^2 }
	{({\bf z} \cdot {\bf z})^{5/2}}\right]_{\mid s=0}.
	\label{eq:Upp}
\end{equation}
We now estimate all the dot products in the above expression to obtain a lower bound.

First,
\begin{equation}
	{\bf z} ^\prime \cdot {\bf z} ^\prime = 1,
	\label{eq:1}
\end{equation}
since  $s$ is the arc length, and from here ${\bf z} ^\prime \cdot {\bf z} ^{\prime \prime} = 0$. Now, to  estimate
$ {\bf z}\cdot {\bf z} $ and $ {\bf z}\cdot {\bf z}^\prime$ we observe that  ${\bf z}\cdot {\bf z}^\prime= \frac{1}{2} ({\bf z}\cdot {\bf z})^\prime  $  and we note that the first expression, as a  function of $t$ with $ s=0 $ fixed has a   minimum at $t=0$ that we call $\delta ^2 $, and that the second function vanishes at $ t=0 $.  Applying Taylor's formula with respect to $t$ we then have
 \begin{equation}
	{\bf z} \cdot {\bf z}  =\delta^2 + \widehat{k  t ^2}, \hspace{.7cm}  {\bf z} \cdot {\bf z}^\prime =  \frac{1}{2}
	({\bf z} \cdot {\bf z})^\prime  = \widehat {\ kt\ },
	\label{eq:zz}
\end{equation}
where the constant $k$ is determined by  the $ C^2$--norm $M$ of $ {\bf z} $. For the remaining dot product we have (still keeping $ s=0 $ and $t$ arbitrary):
\begin{equation}
	|{\bf z} \cdot {\bf z} ^{\prime\prime}| \leq | {\bf z} | |{\bf z} ^{\prime\prime}  |    \buildrel{  (\ref{eq:zz}) }\over{\leq}  M
	\sqrt{ \delta ^2 + k t ^2 }.
	\label{eq:zz''}
\end{equation}

Using the above estimates in  (\ref{eq:Upp}),  we obtain
\begin{equation}
	U ^{\prime \prime}(0,t, \lambda ) \geq
	\frac{(1- M \sqrt{  \delta^2 + k     t ^2 })\delta^2- 3k ^2 t^2 }
	{(\delta^2 + k  t ^2)^{5/2}} .
	\label{eq:inequality1}
\end{equation}

Now we restrict  $t$ to have  $\delta ^2 + kt ^2\leq 2 \delta^2 $; this  guarantees that the denominator in  (\ref{eq:inequality1}) does not exceed $ (2 \delta )^{5/2}$; to bound the numerator, we further restrict $t$ so that the dominant part
$ \delta ^2 - 3 k ^2 t^2 \geq \frac{1}{2} \delta ^2 $, thus bounding
the numerator from below by
\[
	(  \delta ^2 - 3 k ^2 t ^2 )  -  M  \sqrt{2 \delta ^2} \delta^2 \geq \frac{1}{2} \delta ^2 - M  \sqrt{2} \delta ^3 > \frac{1}{4} \delta ^2
\]
if $ \delta   $ is sufficiently small.
Summarizing, we restricted $t$ to
\begin{equation}
	| t | \leq c \delta   \buildrel{def}\over{=} \tau ( \lambda ), \  \  \hbox{where}  \  \ c = \min(k^{-1/2} ,
	(k  \sqrt{ 6} ) ^{-1}  ),
	\label{eq:tau}
\end{equation}
and showed that for all such $t$ and for $ \delta$ small enough
\begin{equation}
	U^{\prime \prime}(0,t, \lambda ) \geq
	\frac{ \frac{1}{4} \delta ^2  }
	{(2\delta^2 )^{5/2}}  = \frac{c_1}{\delta ^3},
	\label{eq:inequality2}
\end{equation}
where $ c_1=2 ^{-9/2} $.
With $ \tau $ defined in   (\ref{eq:tau}) we obtain
  $\lim_{\tau\to 0} \left( \tau\cdot\min_{|t| \leq \tau} \sqrt{a(t,\lambda)}\right) = \infty$, thus completing the proof of the lemma.
 $\diamondsuit$
\end{proof}

{\bf Proof of Theorem \ref{thm:thm1}}
Consider  the linearized equation
\begin{equation}
	\ddot x + U^{\prime \prime} (0,t, \lambda)x = 0,
	\label{eq:lineq1}
\end{equation}
{and consider the phase point   $ z=x+i \dot x $ of a nontrivial solution.
 Lemma \ref{lem:arg} gives us the rotation estimate (\ref{eq:argestimate}) for any time interval $ [t_0, t_1] $; let this interval be
 $ [- \tau ( \lambda ) ,\tau ( \lambda )] $ where $ \tau ( \lambda ) $  is taken from the statement of Lemma  \ref{lem:uniformlimit}. We then have  from   (\ref{eq:argestimate}):
  (\ref{eq:argestimate})
 	\begin{equation}
	\theta [z]\buildrel{def}\over{=} \arg z(t) \biggl|_{t_0}^{t_1}\leq
	- 2\tau(\lambda )\cdot\min_{|t|\leq \tau(\lambda )}\sqrt{U^{\prime \prime}(0,t, \lambda)} + \pi.
	\label{eq:argestimate1}
\end{equation}
According to the conclusion  (\ref{eq:lemma4})  of   Lemma \ref{lem:uniformlimit},
$ \theta \rightarrow - \infty $ as $ \lambda \rightarrow 0 $. This satisfies the condition   (\ref{eq:arginf})
Lemma \ref{lem:arginf}, which now applies and its conclusion comples the proof of  Theorem \ref{thm:thm1}.  \hfill $\diamondsuit$
}
\section{Stability of the equilibrium points in the curved Sitnikov problem}\label{section_stability}
The system \eqref{sistemaestudio} has two equilibria  $(0,0)$ and $(\pi,0)$, which correspond to periodic orbits for $\mathcal{X}_{\varepsilon}$.
The associated linear system around the fixed point $(q_{*},p_{*})$ can be
written as
   \begin{equation}\label{partelineal}
      \dot{\mathbf{v}}=A(t)\mathbf{v}\,,\qquad \mathbf{v}=\left(\begin{array}{c}x \\y \\\end{array} \right)\,,
   \end{equation}
   with
   \begin{equation*}A(t)=\left.\left(
               \begin{array}{cc}
                 0 & 1 \\
                 \frac{\partial  {f}_{\varepsilon}}{\partial q} & 0 \\
               \end{array}
             \right)\right|_{q=q_{*}}.
   \end{equation*}

Let $X(t)$ be a fundamental matrix solution of system (\ref{partelineal}) given by
   \[
    X(t)=\left(
           \begin{array}{cc}
             x_{1}(t) & x_{2}(t) \\
             y_{1}(t) & y_{2}(t) \\
           \end{array}
         \right),
    \]
     with the initial condition $X(0)=I$, the identity matrix;  $x_{1}$ is an even function and $x_{2}$ and odd one since $\partial  {f}_{\varepsilon}/\partial q$ is an even function with respect to $t$. The monodromy matrix  is given by $X(2\pi)$ and we denote $\lambda_{1},\lambda_{2}$ its eigenvalues, the Floquet multipliers associated to (\ref{partelineal}).
   These are given by
   \begin{equation}\label{valorespropios}
      \lambda_{1},\lambda_{2}=\frac{x_{1}(2\pi)+y_{2}(2\pi)\pm\sqrt{\left(x_{1}(2\pi)+y_{2}(2\pi)\right)^{2}-4}}{2}\,,
   \end{equation}
   the trace  $\textrm{Tr}(X(2\pi))=x_{1}(2\pi)+y_{2}(2\pi)$ determines the linearized dynamics around the fixed point. Moreover, since the function $(\partial {f}_{\varepsilon}/\partial q)_{\mid q=q^*}$  is an even function, we know $x_{1}(2\pi)=y_{2}(2\pi)$ (see \cite{Magnus}) and then
   \begin{equation}\label{valorespropiosenpi}
      \lambda_{1},\lambda_{2}=y_{2}(2\pi)\pm\sqrt{\left(y_{2}(2\pi)\right)^{2}-1}\,.
   \end{equation}
   To emphasize the dependence on the parameters $\eps, r$ we write
   \begin{equation}\label{funciondelta}
      y_{2}(2\pi;\varepsilon,r)=y_2(2\pi).
      \end{equation}
 Thus the linear stability of the equilibrium is
   \begin{enumerate}
      \item Elliptic type: $| y_{2}(2\pi;\varepsilon,r) |<1$.
      \item Parabolic type: $|y_{2}(2\pi;\varepsilon,r)|=1$.
      \item Hyperbolic type: $|y_{2}(2\pi;\varepsilon,r)|>1$.
   \end{enumerate}
   Since the Wronskian is equal to $1$ for all $t$, then
   \[W(\varepsilon, r)=(y_{2}(2\pi;\varepsilon,r))^{2}-x_{2}(2\pi;\varepsilon,r)y_{1}(2\pi;\varepsilon,r)=1,\]
   and we have:
   \begin{equation}\label{wronskiano}
      \left(y_{2}(2\pi;\varepsilon,r)\right)^{2}=1+x_{2}(2\pi;\varepsilon,r)y_{1}(2\pi;\varepsilon,r)\,.
   \end{equation}
   From this last expression it follows  that, in  the parabolic case, the periodic orbit corresponding to the equilibrium point $(\pi,0)$ is associated to   $x_{1}(t)$ or to $x_{2}(t)$ (or to both).


\subsection{Stability of the equilibrium point $(\pi,0)$.  Case $\eps=0$.} In this subsection we consider the system \eqref{sistemaestudio}-\eqref{fuerzaparametrogamma} for the case $\varepsilon=0$, namely when the primaries are following circular trajectories. We also fix $R=1$. The extended vector field under study is
   \begin{equation}\label{funcionestudio_0}
     \mathcal{X}_{0}(q,p,t;\gamma)=\left\{
        \begin{array}{rcl}
           \dot{q}&=&p\\
           \dot{p}&=&{f}_{0}(q,t;r)\\
           \dot{t}&=&1
        \end{array}\right.
         \end{equation}
  where
  \begin{equation}\label{eq:f0}
     {f}_{0}(q,t;r):= -\frac{(1+r\cos(t))\sin(q)}{||\mathbf{x}-\mathbf{x}_{1}||^{3}}-\frac{(1-r\cos(t))\sin(q)}
     { ||\mathbf{x}-\mathbf{x}_{2}||^{3}}\,,\\
 \end{equation}
     \begin{eqnarray*}
     ||\mathbf{x}-\mathbf{x}_{1}||&=&\left[r^{2}+2+2 r\cos(t)-2\cos (q)-2r\cos(q)\cos(t))\right]^{1/2}\,,\nonumber\\
     ||\mathbf{x}-\mathbf{x}_{2}||&=&\left[r^{2}+2-2 r\cos(t)-2\cos (q)+2r\cos(q)\cos(t))\right]^{1/2}\,,\nonumber
  \end{eqnarray*}
  with $0<r<2$. We observe that in this case, function (\ref{eq:f0}) is $\pi$--periodic (remember that ${f}_{\varepsilon}(q,t;r)$ is $2\pi$--periodic if $\varepsilon >0$).


   The linear system associated to (\ref{funcionestudio_0}) around the fixed point $(\pi,0)$ is defined by the function
   \begin{equation}\label{parcialenpi}
      \frac{\partial {f}_{0}}{\partial q}(\pi,t;r)=\frac{(1+r\cos(t))}{\left[r^{2}+4+4r\cos(t)\right]^{3/2}}
     +\frac{(1-r\cos(t))}{\left[r^{2}+4-4r\cos(t)\right]^{3/2}}\,,
 \end{equation}
   which is $C^{1}$ respect to $t$ and $r$.

\begin{Lemma}\label{monotone} The function $\frac{\partial {f}_{0}}{\partial q}(\pi,t;r)$  is monotone decreasing with respect to   $r$, that is,  for all $t\in[0,\pi)$
   \begin{equation*}
      \label{equality2}
      \frac{\partial {f}_{0}}{\partial q}(\pi,t;r_{1})>\frac{\partial {f}_{0}}{\partial q}(\pi,t;r_{2})
   \end{equation*}
   if $r_{1}<r_{2}$ (see Figure \ref{partelinealenpifig}).
\end{Lemma}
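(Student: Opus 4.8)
My plan is to read the claim as the strict differential inequality $\partial g/\partial r<0$, where I abbreviate $g(r):=\frac{\partial f_0}{\partial q}(\pi,t;r)$ and $c:=\cos t$. First I would reduce the range of $t$: the substitution $t\mapsto\pi-t$ sends $c\mapsto-c$ and merely interchanges the two summands of (\ref{parcialenpi}), so $g$ is invariant under $c\mapsto -c$ and it suffices to treat $c\in[0,1]$. Next I would differentiate (\ref{parcialenpi}) in $r$. Setting $u:=r^2+4+4rc$ and $v:=r^2+4-4rc$ (the two denominators are $u^{3/2}$ and $v^{3/2}$, and on the admissible range $u\ge v>0$, since $u-v=8rc\ge0$ and $v\ge(2-r)^2>0$), a direct computation collapses to
\[
\frac{\partial g}{\partial r}=\frac{A-B}{v^{5/2}}-\frac{A+B}{u^{5/2}},\qquad A:=2c(r^2+1)\ge0,\quad B:=r(2c^2+3)>0 .
\]
Hence the lemma is equivalent to the single inequality
\[
\Psi(r,c):=(A+B)\,v^{5/2}-(A-B)\,u^{5/2}>0 .
\]

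I would then split on the sign of $A-B$. If $A\le B$, then $-(A-B)u^{5/2}\ge0$ while $(A+B)v^{5/2}>0$ (because $B>0$ forces $A+B>0$), so $\Psi>0$ is immediate. A short sign analysis of the quadratic $2rc^2-2(r^2+1)c+3r$ (whose discriminant is $4(r^4-4r^2+1)$) shows that, within $c\in[0,1]$, one has $A>B$ only for $r<r^*:=\sqrt{2-\sqrt3}$ (for $r\ge r^*$ the quadratic is sign-definite or has both roots above $c=1$); thus the easy case already disposes of all $r\in[r^*,2)$, and in the remaining case $v$ stays bounded away from $0$, so no near-collision occurs. For $A>B$ both factors in $\Psi$ are positive, and squaring turns $\Psi>0$ into the polynomial inequality $(A+B)^2v^5-(A-B)^2u^5>0$, to be checked on the compact region $\{0<r\le r^*,\ c\in[0,1]\}$.

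As a guide I would first record the extremal slice $c=1$, where $u=(2+r)^2$, $v=(2-r)^2$, $A+B=(2r+1)(r+2)$ and $A-B=(2r-1)(r-2)$ collapse to the clean closed form
\[
\Psi(r,1)=4r^3(20+r^2)(4-r^2)>0\qquad(0<r<2).
\]
This also exposes the true difficulty: $\Psi$ vanishes identically at $r=0$ (there $u=v$ and $B=0$), and on the slice $c=1$ it is only of order $r^3$, so the inequality is asymptotically tight at the corner $(r,c)=(0,1)$ and every linearized estimate degenerates there. The step I expect to be the crux is therefore controlling $\Psi$ for general $c\in[0,1)$ near this corner, where the exact $5/2$-power comparison must be retained. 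I see two viable routes. The first is to prove the polynomial inequality directly, anchoring a Taylor estimate at $r=0$ — where $\partial_r\Psi(0,c)=192\sin^2 t\ge0$ certifies the correct direction of departure — or by an explicit Sturm/resultant certification. The second, which I would prefer, is to show that $\Psi(r,\cdot)$ is monotone decreasing on $c\in[0,1]$, so that its minimum is attained at $c=1$ and the whole lemma reduces to the displayed endpoint identity; numerically $\partial_c\Psi<0$ throughout the admissible region, and proving this sign — again reducing, after clearing $u^{3/2}$ and $v^{3/2}$, to a polynomial inequality — is where the real work lies.
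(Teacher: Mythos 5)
Your algebra is sound as far as it goes: the identity $\partial g/\partial r=(A-B)v^{-5/2}-(A+B)u^{-5/2}$ with $A=2c(r^2+1)$, $B=r(2c^2+3)$ is correct, the symmetry reduction to $c\in[0,1]$ is legitimate, the case $A\le B$ is correctly dispatched, and both anchor computations ($\Psi(r,1)=4r^3(20+r^2)(4-r^2)$ and $\partial_r\Psi(0,c)=192\sin^2 t$) check out. But the argument is not a proof. In the regime $A>B$ --- which, by your own quadratic analysis, occurs for $r<1/2$ and $c$ between the smaller root of $2rc^2-2(r^2+1)c+3r$ and $1$ --- you establish $\Psi>0$ only on the edge $c=1$ and to first order in $r$ at $r=0$; the interior of that region, and in particular the corner $(r,c)\to(0,1)$ where the first-order term $192(1-c^2)r$ vanishes and $\Psi(r,1)$ is only of order $r^3$, is left to ``numerically $\partial_c\Psi<0$''. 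Positivity on part of the boundary plus a normal derivative along one edge does not imply positivity in a two-dimensional region, so the crux inequality $(A+B)^2v^5>(A-B)^2u^5$ on $\{0<r<1/2,\ c\in[0,1]\}$ remains unproven, and that is the entire content of the lemma in the hard regime.

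That said, your case analysis is genuinely valuable, because it locates exactly where the difficulty sits --- and it is the same place where the paper's own two-line proof is defective. The paper adds the two numerators $-(A+B)$ and $A-B$ and places their sum $-4r\cos^2 t-6r$ over $\min\{u^{5/2},v^{5/2}\}$; that majorization is valid only when both numerators are nonpositive, and in your regime $A>B$ the second numerator is positive. Numerically, at $r=0.1$, $t=0$ the true derivative is about $-3\times10^{-4}$ while the paper's claimed majorant is about $-4\times10^{-2}$, so the displayed inequality is false there (the conclusion $\partial F/\partial r<0$ survives, but only barely: the two competing terms are $0.0614$ versus $0.0617$). So neither you nor the paper has a complete argument at the corner; to finish along your lines you must actually certify the polynomial inequality, e.g.\ by proving your conjectured monotonicity of $\Psi$ in $c$, or by a second-order expansion of $\Psi$ at $(0,1)$ combined with a compactness argument away from the corner.
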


\begin{proof} Let be $F(t,r)=\frac{\partial {f}_{0}}{\partial q}(\pi,t;r)$, then
by straightforward computation we get
\begin{eqnarray*}
\frac{\partial F}{\partial r}(t,r) &=& \frac{(\cos(t))[r^2+4r\cos(t)+4] - 3(1+r\cos(t))[r+2\cos(t)]}{\left[r^{2}+4+4r\cos(t)\right]^{5/2}} \\
     &+& \frac{(- \cos(t))[r^2-4r\cos(t)+4] - 3(1-r\cos(t))[r-2\cos(t)]}{\left[r^{2}+4-4r\cos(t)\right]^{5/2}} \\
&\leq & \frac{-4r\cos(t)^2 - 6r}{\min \{ \left[r^{2}+4+4r\cos(t)\right]^{5/2},\left[r^{2}+4-4r\cos(t)\right]^{5/2} \}} < 0.
    \end{eqnarray*}
\hfill $\diamondsuit$
\end{proof}


   \begin{figure}
   \begin{center}
                  \includegraphics[width=7cm]{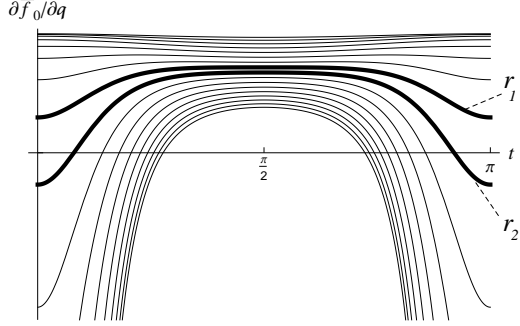}
                  \caption{\textit{Plots for $\partial \bar{f}_{0}/\partial q$ varying $r$ from $0$ to $2$.}}
                  \label{partelinealenpifig}
   \end{center}
   \end{figure}

\begin{Proposition}\label{proposicion2}
   The equilibrium point $(\pi,0)$ is of hyperbolic type if $r\leq\left(\sqrt{17}-3\right)^{1/2}=1.059\dots$.
\end{Proposition}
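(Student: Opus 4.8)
The plan is to reduce the question to the sign of the coefficient of the variational equation. For $\varepsilon=0$ the linearization of \eqref{funcionestudio_0} at $(\pi,0)$ is the Hill equation $\ddot x = g(t)\,x$, where $g(t)=\frac{\partial f_0}{\partial q}(\pi,t;r)$ is the function given in \eqref{parcialenpi}. According to the trichotomy \eqref{valorespropiosenpi}, the equilibrium is hyperbolic precisely when $|y_2(2\pi;\varepsilon,r)|>1$. The first step is to establish the sufficient condition: if $g(t)\ge 0$ for all $t$ and $g\not\equiv 0$, then $(\pi,0)$ is hyperbolic. To see this I would use the solution $x_2(t)$ with $x_2(0)=0$, $\dot x_2(0)=1$ (so $y_2=\dot x_2$): as long as $x_2>0$ one has $\ddot x_2 = g\,x_2\ge 0$, so $\dot x_2$ is nondecreasing and stays $\ge 1$, whence $x_2$ is increasing and positive on all of $[0,2\pi]$; since $g>0$ on a set of positive measure, $\dot x_2$ increases strictly there, giving $y_2(2\pi;\varepsilon,r)=\dot x_2(2\pi)>1$. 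This is exactly the hyperbolic case.

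It then remains to determine for which $r$ one has $g(t)\ge 0$ for all $t$. Writing $c=\cos t\in[-1,1]$, the function $g=\frac{1+rc}{(r^2+4+4rc)^{3/2}}+\frac{1-rc}{(r^2+4-4rc)^{3/2}}$ is even in $c$, so it suffices to work on $c\in[0,1]$. I claim its minimum is attained at the endpoint $c=1$ (that is, at $t=0$). Evaluating there, $g|_{c=1}=\frac{1+r}{(r+2)^3}+\frac{1-r}{(2-r)^3}$, and setting this to zero and clearing denominators yields $(1+r)(2-r)^3-(r-1)(r+2)^3=16-12r^2-2r^4=0$, i.e. $r^4+6r^2-8=0$, whose positive root is $r_*=(\sqrt{17}-3)^{1/2}=1.059\dots$. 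By Lemma~\ref{monotone} the value $F(0,r)=g|_{c=1}$ is strictly decreasing in $r$, so $g|_{c=1}\ge 0$ exactly when $r\le r_*$. Combining with the endpoint-minimum claim, for every $r\le r_*$ we get $\min_t g(t)=g|_{c=1}\ge 0$ with $g\not\equiv 0$, and the first step gives hyperbolicity.

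The main obstacle is the endpoint-minimum claim, i.e. showing $\partial g/\partial c\le 0$ on $[0,1]$. A direct computation gives $\frac{\partial g}{\partial c}=r\big[(2-r^2)\big(v^{-5/2}-u^{-5/2}\big)-2rc\,\big(u^{-5/2}+v^{-5/2}\big)\big]$ with $u=r^2+4+4rc$ and $v=r^2+4-4rc$; for $r<\sqrt2$ (which covers the range here, since $r_*<\sqrt2$) this is a difference of a positive and a negative term, and the inequality is tight near $c=0$, where $\partial g/\partial c$ vanishes by evenness. I would handle it by the substitution $u=m+d$, $v=m-d$ with $m=r^2+4$, $d=4rc$, reducing the claim to $(2-r^2)\big[(m+d)^{5/2}-(m-d)^{5/2}\big]\le \tfrac{d}{2}\big[(m+d)^{5/2}+(m-d)^{5/2}\big]$ and estimating the bracketed differences via the convexity of $s\mapsto s^{5/2}$; alternatively, one may invoke Lemma~\ref{monotone} to reduce the entire claim to the single parameter $r=r_*$, where only $g_{r_*}(t)\ge 0$ must be verified. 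Note that for $r\le 1$ the statement is immediate, since then $1\pm rc\ge 0$ and both summands of $g$ are nonnegative, so that the only delicate range is $1<r\le r_*$.
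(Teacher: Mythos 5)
Your overall strategy is the same as the paper's: reduce to showing that $F(t,r):=\frac{\partial f_0}{\partial q}(\pi,t;r)\ge 0$ for all $t$ when $r\le(\sqrt{17}-3)^{1/2}$, and then conclude hyperbolicity from the sign of the Hill coefficient. Your first step is fine and is in fact a self-contained replacement for the paper's appeal to Lyapunov's instability criterion (the paper cites Cesari; your monotone-growth argument for $x_2$, $\dot x_2$ proves exactly that criterion in this special case), and your computation of the threshold $F(0,r)\ge 0\iff r\le r_*$ agrees with the paper. The problem is the step you yourself flag as ``the main obstacle'': the claim that $c\mapsto g$ is decreasing on all of $[0,1]$ for $1<r\le r_*$ is left unproven, and the tools you propose do not close it. The naive convexity bounds $u^{5/2}-v^{5/2}\le 5d\,u^{3/2}$ and $u^{5/2}+v^{5/2}\ge 2m^{5/2}$ reduce your inequality to $5(2-r^2)(m+d)^{3/2}\le m^{5/2}$, which is false at $c=1$ for $r$ near $r_*$ (e.g. $r=1.05$ gives roughly $127\le 59$). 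Your fallback --- invoking Lemma~\ref{monotone} to reduce to the single value $r=r_*$ --- does not help either: since $F(0,r_*)=0$ exactly, you are left with precisely the same question (is $t=0$ the global minimizer of $F(\cdot,r_*)$?) for that one parameter value. So as written the proof has a genuine gap at its central inequality.

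For the record, your endpoint-minimum claim is true for $r\ge 1$ and can be salvaged: with $w=v/u$ the claim is equivalent to $\frac{1-w^{5/2}}{1+w^{5/2}}\le\frac{m}{2(2-r^2)}\cdot\frac{1-w}{1-(-w)}$, and the elementary bound $\frac{1-w^{5/2}}{1+w^{5/2}}\le\frac{5}{2}\cdot\frac{1-w}{1+w}$ (i.e.\ $\tanh(5x/2)\le\frac{5}{2}\tanh x$) reduces it to $5(2-r^2)\le r^2+4$, i.e.\ $r\ge 1$ --- consistent with your correct observation that the claim fails for $r<1$, where you argue termwise instead. But the paper sidesteps global monotonicity in $c$ entirely: it only needs $\partial F/\partial t\ge 0$ on the subinterval where $r\cos t>1$ (the only region where a summand of $F$ can be negative), and there it groups the four terms of \eqref{eqnderf} into the two pairs displayed in the proof, each of which is manifestly nonnegative precisely because $1-r\cos t<0$; on the complementary region $F\ge 0$ termwise. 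That grouping is the one idea your reformulation loses when you combine everything over common denominators, and it is what makes the paper's verification a sign inspection rather than a delicate estimate.
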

   \begin{proof}
We first show that $F(t,r)\geq 0$ for all $t$ if and only if $r\leq\left(\sqrt{17}-3\right)^{1/2}$.
We compute \begin{equation}\begin{split}\label{eqnderf}\frac{\partial F}{\partial t}(t,r)=r\sin t\left [-\frac{1}{(r^2+4r\cos t+4)^{1/2}} +\frac{1}{(r^2-4r\cos t+4)^{1/2}} \right.\\ \left.
+\frac{6(1+r\cos t)}{(r^2+4r\cos t+4)^{5/2}}-\frac{6(1-r\cos t)}{(r^2-4r\cos t+4)^{5/2}}\right ].\end{split}\end{equation}

We have $F(t,r)=F\left( \pi-t, r \right)$ so it is enough to restrict $t\in[0,\frac{\pi}{2}]$. Note that $\frac{\partial F}{\partial t}(t,r)=0$ for $t=0,\pi/2$.

For $0<r<2$, we have \[F \left (\frac{\pi}{2},r \right)=\frac{2}{(r^2+4)^{3/2}}>0,\]
and \[F(0,r)=\frac{1+r}{(2+r)^{3}}+\frac{1-r}{(2-r)^{3}}=\frac{-2(r^4+6r^2-8)}{(4-r^2)^{3}}.\]

It follows immediately that $F(0,r)<0$ if $r>(\sqrt{17}-3)^{1/2}$.
If $r\cos t\leq 1$ then \eqref{parcialenpi} implies $F(t,r)\geq 0$. Hence $F(t,r)\geq 0$ for all $t$ provided $r\leq 1$.
Let $1<r\leq (\sqrt{17}-3)^{1/2}$. If $t\in[0,\cos^{-1}(1/r)]$, which is equivalent to $r\cos t>1$, then \begin{equation*}\begin{split} \frac{1}{(r^2-4r\cos t+4)^{1/2}}
-\frac{1}  {(r^2+4r\cos t+4)^{1/2}}\geq 0,\\
\frac{6(1+r\cos t)}{(r^2+4r\cos t+4)^{5/2}}-\frac{6(1-r\cos t)}{(r^2-4r\cos t+4)^{5/2}}\geq 0.\end{split}\end{equation*}
Therefore \eqref{eqnderf} implies that  $F(t,r)$ is increasing in $t$ for $t\in [0,\cos^{-1}(1/r)]$, and, since $F(0,r)\geq 0$, it follows that $F(t,r)\geq 0$ for all $t$.

Now let  $x(t)=x_1(t)  + x_2(t)$ be a particular solution of system (\ref{partelineal}); by hypothesis it satisfies $x(0)=1, \dot{x}(0)=1$. From \eqref{partelineal} with $\varepsilon = 0$ we obtain
$$ \ddot{x} = \frac{\partial {f}_{0}}{\partial q}(\pi,t;r) x \quad {\rm or} \quad \ddot{x} - F(t,r)x = 0.$$

Then, since  $F(t,r)\leq 0$ for all $t$, we can apply directly Lyapunov's instability criterion (see for instance page 60 in \cite{Ces}) to show that $(\pi,0)$ is of hyperbolic type.  $\diamondsuit$\end{proof}

 We can in fact estimate the first value $r_1$ of $r$ at which the equilibrium point $(\pi,0)$ becomes of parabolic type for the first time. 
 {The idea is to extend slightly the result beyond $r=\left(\sqrt{17}-3\right)^{1/2}$,} and then find the maximum of the $r$ for which Proposition \ref{proposicion2} holds. In the way we have to do straightforward analytic but tedious computations that we decide to avoid in this paper. Finally we get that the value of $r$ to have the first parabolic solution is  $r_1 \approx 1.2472\cdots$.



   \begin{figure}
   \begin{center}
       \subfigure{\includegraphics[width=6.5cm]{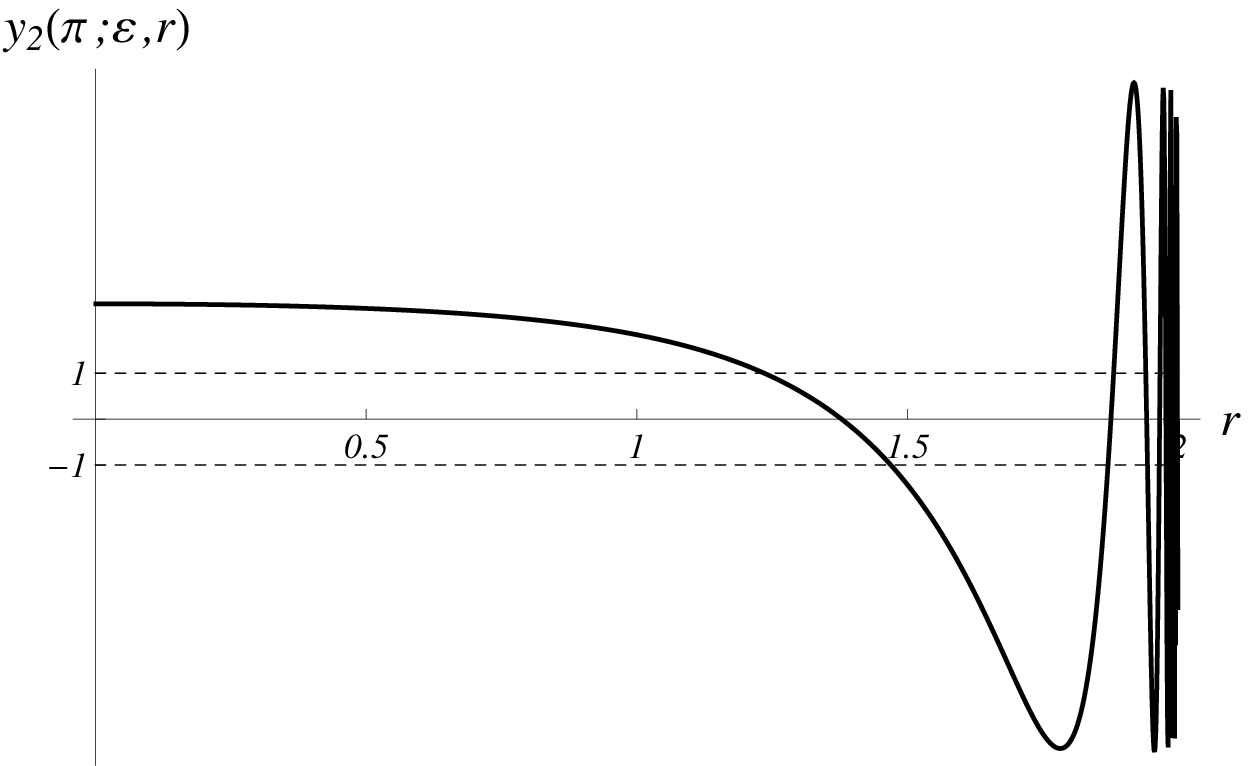}}
       \subfigure{\includegraphics[width=6.5cm]{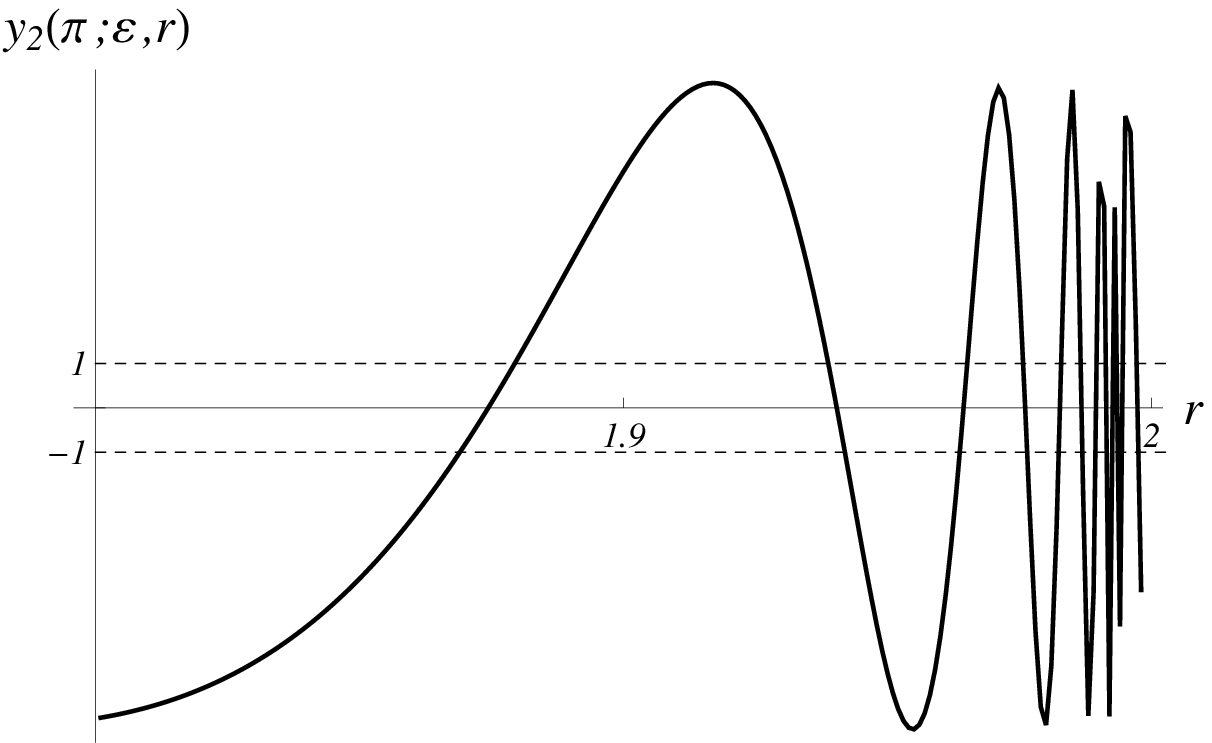}}
       \caption{\textit{Graphs of $y_{2}(\pi;\varepsilon,r)$. The plot on the right is a magnification of the one on the left for r close to 2}}
       \label{estabilidad}
   \end{center}
   \end{figure}

In Figure \ref{estabilidad} we show a couple of numerical simulations which illustrate the stability interchanges of the equilibrium point $(\pi,0)$ for $r \in (0,2)$ and $\varepsilon = 0$. The figure on the right hand side is a plot for $r$ close to 2.

\subsection{Stability of the equilibrium point $(\pi,0)$.  Case $\eps \neq 0$.}

In this case, for every $0<r<(\sqrt{17}-3)^{1/2}$, $\partial f_0/\partial q (\pi,t;r)>0$,  hence $\partial f_\eps/\partial q (\pi,t;r)>0$ for all $\eps>0$ sufficiently small (depending on $r$). Therefore  $(\pi,0)$  remains an equilibrium point of hyperbolic type in the case when the primaries move on Keplerian ellipses of sufficiently small eccentricity $\varepsilon>0$.

We remark   that  Theorem \ref{thm:thm1}  does not depend of the shape of the curves where ${\bf x}(x)$ or ${\bf y}(t)$ are moving, in other words we can apply Theorem \ref{thm:thm1} independently if the primaries are moving on a circle or on ellipses of eccentricity $\varepsilon>0$. Thus we obtain the following result on  stability interchanges:

\begin{Theorem}\label{thm:Sit1}
In the curved Sitnikov problem, let us fix any $\varepsilon\in [0,1)  $,   let $R=1$, and consider  $r$ (the semi-major axes of the Keplerian ellipses traced out by the primaries) as the parameter. As $r$ approaches $\frac{2}{1+\varepsilon}$, the distance between a Keplerian ellipse and the circle of the massless particle approaches zero. There exists a sequence   $r_n\uparrow \frac{2}{1+\varepsilon}$ satisfying
\begin{equation}\label{secesion}
	r_0 \leq r_1 < r _2 \leq  r _3 \cdots <
	r_{2n}\leq r_{2n+1} < \cdots,
\end{equation}
such that the equilibrium point $(\pi, 0)$ of equation (\ref{partelineal})
is strongly stable for $ r \in (r_{2n-1}, r _{2n}) $
 and not strongly stable for some $r$  in the complementary intervals $(r_{2n }, r _{2n+1}) $. In other words, the equilibrium point $(\pi, 0)$ loses and then regains its strong stability infinitely many times as $ r $ increases towards $\frac{2}{1+\varepsilon}$.
\end{Theorem}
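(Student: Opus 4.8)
The plan is to derive Theorem~\ref{thm:Sit1} as a direct application of the general Theorem~\ref{thm:thm1}, by checking that the curved Sitnikov problem fits the abstract framework of Section~\ref{general}. The key conceptual point, already flagged in the paragraph preceding the statement, is that the presence of \emph{two} heavy masses rather than one does not obstruct the argument: what matters is that the linearized equation around the equilibrium $(\pi,0)$ has the form $\ddot S + a(t,\lambda)S=0$ with $a(t,\lambda)=U''(0,t,\lambda)$ for a Newtonian-type potential, and that the minimum distance between the orbit of the nearest primary and the circle tends to zero. First I would set the parameter correspondence $\lambda = \frac{2}{1+\varepsilon} - r$, so that $\lambda\to 0$ precisely as $r\uparrow\frac{2}{1+\varepsilon}$, and reverse the ordering of the sequence in \eqref{eq:sequence} to obtain the increasing sequence $r_n\uparrow\frac{2}{1+\varepsilon}$ claimed in \eqref{secesion}.

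Next I would identify the geometric data. Here ${\bf x}(s,\lambda)$ parametrizes the circle $y^2+z^2=1$ by arc length $s$ (with $s=0$ at the point $q=\pi$, i.e.\ $(0,-1,0)$), and ${\bf y}(t,\lambda)$ is the trajectory of the primary whose apocenter approaches that point, namely ${\bf x}_1(t)$ or ${\bf x}_2(t)$ from Section~\ref{curved}. The equilibrium $(\pi,0)$ corresponds to the point on the circle realizing the minimum distance to this primary's orbit, so hypothesis~\eqref{eq:min} holds with this identification. I would verify~\eqref{eq:min02}, the non-degeneracy in $t$, using the fact that the apocenter $t=\pi$ of the primary is a non-degenerate extremum of the distance function, which is a routine second-derivative computation in the Keplerian parametrization \eqref{primarias}. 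The orthogonality condition~\eqref{eqn:orthogonality} follows from the reversal symmetry of the configuration: at the closest approach the velocity of the primary lies in the plane of its motion while the tangent to the circle is perpendicular to that plane at the relevant point, so the two velocity vectors are orthogonal. The $C^2$-boundedness uniform in $\lambda$ is immediate since $r$ ranges over a bounded interval and the orbits are smooth away from collision, and \eqref{eqn:A} is exactly the statement that the minimum distance $\delta(\lambda)\to 0$ as $r\to\frac{2}{1+\varepsilon}$, which holds by the collision threshold $r(1+\varepsilon)=2R$ identified in Section~\ref{curved}.

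With all hypotheses of Theorem~\ref{thm:thm1} verified, its conclusion yields the sequence of intervals on which $s=0$ is strongly stable and complementary intervals containing non-strongly-stable values, which translates verbatim into the stated behavior of $(\pi,0)$. The one genuine subtlety --- and what I expect to be the main obstacle --- is reconciling the two-mass situation with the single-mass potential~\eqref{eq:U} used in Theorem~\ref{thm:thm1}. The honest way to handle this is to observe that as $\lambda\to 0$ the contribution of the \emph{far} primary to $U''(0,t,\lambda)$ stays uniformly bounded, while the contribution of the \emph{near} primary blows up like $\delta^{-3}$ exactly as in the estimate~\eqref{eq:inequality2} of Lemma~\ref{lem:uniformlimit}. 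Thus the lower bound $U''(0,t,\lambda)\geq c_1/\delta^3$ on the short interval $|t|\leq\tau(\lambda)$ survives, possibly with a slightly worse constant, so the argument estimate~\eqref{eq:argestimate1} still forces $\theta\to-\infty$ and Lemma~\ref{lem:arginf} applies. I would therefore either re-run the proof of Lemma~\ref{lem:uniformlimit} with the two-body potential $V_\eps$ in place of the single-body $U$, noting that only the dominant near-primary term is needed for the blow-up, or equivalently remark that Theorem~\ref{thm:thm1} applies with ${\bf y}(t,\lambda)$ taken to be the near primary alone and the far primary absorbed into a uniformly bounded perturbation of $a(t,\lambda)$ that does not affect the divergence in~\eqref{eq:arginf}.
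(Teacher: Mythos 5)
Your proposal is correct and follows essentially the same route as the paper: identify the circle with ${\bf x}(s,\lambda)$, the orbit of the near primary with ${\bf y}(t,\lambda)$, set $\lambda$ in correspondence with $r$ so that $\delta(\lambda)=2-r(1+\varepsilon)\to 0$, check \eqref{eq:min}, \eqref{eq:min02}, \eqref{eqn:orthogonality} and the uniform $C^2$ bounds, and invoke Theorem~\ref{thm:thm1}. If anything, you are more explicit than the paper about the one real subtlety (the far primary contributing only a uniformly bounded perturbation to $U''$, so the $\delta^{-3}$ blow-up of Lemma~\ref{lem:uniformlimit} survives), which the paper disposes of only in the remark preceding the theorem.
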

{\begin{proof}[Proof of Theorem \ref{thm:Sit1}]
We verify that Theorem \ref{thm:thm1} applies. The curve ${\bf x}(s,\lambda)$ of Theorem \ref{thm:thm1} is represented by the circle of radius $R=1$,  the curve $ {\bf y} (t,\lambda)$ is represented by the orbit of the primary that gets closer to $(\pi,0)$ (when $\varepsilon =0$ the primaries are co-orbital), and the parameter $\lambda$ corresponds to $r$.  The planes of  the two curves are perpendicular, as in \eqref{eqn:orthogonality}, and the minimum distance between the curves, given by \eqref{eq:min}, is non-degenerate as in \eqref{eq:min02}, and it corresponds to the infinitesimal mass being at $y=-1$ and the closest primary to this point being at $y=1-r(1+\varepsilon)$. Thus, the minimum distance is $\delta(r)=2-r(1+\varepsilon)$,  and it approaches $0$ when $r\to \frac{2}{1+\varepsilon}$. To apply Theorem \ref{thm:thm1} we only need to verify that ${\bf x}$ and ${\bf y}$ are bounded in the $C^2$-norm uniformly in $r$. This is obviously true for ${\bf y}(t,r)$ since the motion of the primary is not affected by the motion of the infinitesimal mass; it is also true for ${\bf x}(s,r)$ since $s=$arclength and the motion lies on the circle of radius $R=1$, hence $\|{\bf x}(s,r)\|=1$ (the radius of the circle), $\|{\bf x}^\prime(s,r)\|=1$ (the unit speed of a curve parametrized by arc-length), and $\|{\bf x}^{\prime\prime}(s,r)\|=1$ (the curvature of the circle). Hence the conclusion of Theorem \ref{thm:thm1} follows immediately.
 $\diamondsuit$\end{proof}
}

\subsection{Stability of the equilibrium point $q=(0,0)$.}
{The  linear stability of  $(0,0)$ is the same as of the barycenter in the classical Sitnikov problem, since
\[\dfrac{\partial{f_\varepsilon}}{\partial q}(0)=-\frac{2R}{r^3\rho(t;\varepsilon)^3}.
\]}
The stability of this point can be treated very similarly to that of the origin for Hill's equation, so in this analysis we  use results from that theory.

As in the study of the other equilibrium point we start with the case  $\varepsilon = 0.$ Here the function $f_0(q,t;r)$
 defined in equation (\ref{sistemaestudio}) around $q=(0,0)$ is given by
  \begin{equation}\label{funcionencero}
     \frac{\partial f_{0}}{\partial q}(q)=-\frac{2}{r^{3}}q+\frac{9+r^{2}+9r^{2}(\cos(t))^{2}}{3r^{5}}q^{3}+\mathcal{O}(q^{5})\,.
  \end{equation}
  The local dynamics is determined by the linear part. The eigenvalues are on the unit circle, given by $\sigma_{1,2}=\pm i\sqrt{2/r^{3}}$, and the Floquet multipliers, which come from the monodromy matrix
  \[
     X(\pi)=
     \left(
       \begin{array}{cc}
         \cos \left(\sqrt{\frac{2}{r^{3}}}\pi\right)& \left(\sqrt{\frac{r^{3}}{2}}\right)\sin \left(\sqrt{\frac{2}{r^{3}}}\pi\right) \\
         -\left(\sqrt{\frac{2}{r^{3}}}\pi\right)\sin \left(\sqrt{\frac{2}{r^{3}}}\pi\right) & \cos \left(\sqrt{\frac{2}{^{3}}}\pi\right) \\
       \end{array}
     \right), \qquad X(0)=I\,,
  \]
  are $\lambda_{1,2}=e^{\pm i\sqrt{2/r^{3}}\pi}$.

Hence $q=(0,0)$ is of elliptic type if $\sqrt{2/r^{3}}\neq k$ for any $k\in\mathbb{Z}$, and it is of parabolic type if $\sqrt{2/r^{3}}=k$ for some $k\in\mathbb{Z}$. In fact, in the parabolic case, if $\sqrt{2/r^{3}}=2m$, $m\in\mathbb{Z}$, then  there exists a $\pi$-periodic solution, and if $\sqrt{2/r^{3}}=2m+1$, $m\in\mathbb{Z}$, then there exists a $2\pi$-periodic solution.

\begin{Proposition}\label{proposicion8}
   The equilibium point $q=(0,0)$ of  the system defined by   \eqref{funcionencero}  is stable  for all $r\in(0,2)$.
\end{Proposition}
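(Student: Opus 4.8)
The plan is to first dispose of the linear stability, which is immediate, and then to promote it to genuine (Lyapunov) stability of the area-preserving time-$\pi$ Poincar\'e map via Moser's invariant curve theorem, the delicate point being the resonant values of $r$. Concretely, the force expansion (\ref{funcionencero}) has $t$-independent linear part $-\tfrac{2}{r^{3}}q$, so the linearization of $\ddot q=f_0(q,t;r)$ at $(0,0)$ is the \emph{autonomous} harmonic oscillator $\ddot q+\tfrac{2}{r^{3}}q=0$. Its monodromy over the period $\pi$ is a rotation of trace $2\cos\!\big(\pi\sqrt{2/r^{3}}\big)\in[-2,2]$, so both Floquet multipliers stay on the unit circle for \emph{every} $r\in(0,2)$. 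Hence the origin is never hyperbolic: it is elliptic when $\sqrt{2/r^{3}}\notin\mathbb{Z}$ and linearly parabolic (monodromy $\pm I$, still bounded) when $\sqrt{2/r^{3}}\in\mathbb{Z}$, which already settles linear stability throughout $(0,2)$.

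To upgrade to stability of the fixed point of the (area-preserving) time-$\pi$ map $P_r$, I would invoke Moser's twist theorem at the non-resonant $r$. The rotation angle is $\alpha(r)=\pi\sqrt{2/r^{3}}$, and the theorem produces invariant curves encircling the origin --- hence Lyapunov stability --- provided $k\alpha\not\equiv0\ (\mathrm{mod}\ 2\pi)$ for $k=1,2,3,4$ and the first Birkhoff twist coefficient does not vanish. The corresponding potential $V_0=-\int f_0\,dq$ is
\[
V_0(q,t;r)=\frac{1}{r^{3}}\,q^{2}-\frac{9+r^{2}+9r^{2}\cos^{2}t}{12\,r^{5}}\,q^{4}+\mathcal{O}(q^{6}),
\]
which is \emph{even} in $q$ by the symmetry $\mathbb{S}_1$; thus there is no cubic term and, to leading order, the twist coefficient is a nonzero multiple of the $t$-average of the quartic coefficient $-\tfrac{9+r^{2}+9r^{2}\cos^{2}t}{12\,r^{5}}$. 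This average is strictly negative for all $r\in(0,2)$, so the twist never degenerates, and $(0,0)$ is Lyapunov stable at every non-resonant $r$.

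The hard part is the countable set of resonant $r$ --- where $\sqrt{2/r^{3}}\in\{k/2\}\cup\{2k/3\}$, including all parabolic values $\sqrt{2/r^{3}}\in\mathbb{Z}$ --- which accumulates at $r=0$ and must therefore be handled uniformly rather than one value at a time. Here I would use the two symmetries together: $\mathbb{S}_1$ forces $P_r$ to be odd, $P_r(-z)=-P_r(z)$, which annihilates every even-degree term of its Birkhoff normal form; in particular the order-$3$ resonant monomial $\bar z^{2}$ is absent, so the $1{:}3$ resonance is harmless and stability there is again decided by the nondegenerate quartic twist above. The surviving danger is the $1{:}4$ resonance ($\alpha\equiv\pi/2$) and the parabolic cases ($\alpha\equiv0,\pi$), where the odd resonant term $\bar z^{3}$ persists and Moser's theorem does not apply verbatim. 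The plan is to use the reversibility $\mathbb{S}_4$ to pin down the phase of this resonant coefficient and to compare its modulus with the definite quartic twist, so that the isoenergetic normal form stays sign-definite and confines nearby orbits. Proving that the twist dominates the order-$4$ and parabolic resonant terms \emph{for every} $r\in(0,2)$, uniformly as $r\to0$, is the main obstacle of the argument.
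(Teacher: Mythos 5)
Your linear analysis agrees with the paper's, and your identification of the sign-definite quartic term in the potential (equivalently, the positive cubic coefficient in the force expansion \eqref{funcionencero}) is exactly the structural fact the paper exploits. But your route to nonlinear stability --- Birkhoff normal form plus Moser's twist theorem --- leaves a genuine gap that you yourself flag: at the resonant values $\sqrt{2/r^{3}}\in\{k/2\}\cup\{2k/3\}$, and in particular at the $1{:}4$ resonance and the parabolic values $\sqrt{2/r^{3}}\in\mathbb{Z}$, the twist theorem does not apply verbatim, and your proposed comparison of the resonant coefficient against the twist is announced as ``the main obstacle'' rather than carried out. Since these values form an infinite set accumulating at $r=0$, the statement ``stable for all $r\in(0,2)$'' is not established by your argument; what you have is stability on the complement of a countable set.

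The paper avoids this case analysis entirely by invoking Ortega's theorem on the nonlinear Hill equation (stated in the Appendix, from \cite{Ortega}): if the linearized equation $y''+a(t)y=0$ is stable and the coefficient $c(t)$ of the leading odd nonlinearity $y^{2n-1}$ has a definite sign, then $y=0$ is stable --- with no non-resonance hypothesis on the Floquet multipliers. Here $a=2/r^{3}$ is constant, so the linear equation is the always-stable harmonic oscillator (in the parabolic cases the monodromy is $\pm I$, hence diagonalizable), and $c(t)=-\frac{9+r^{2}+9r^{2}\cos^{2}t}{3r^{5}}$ is strictly negative for every $r\in(0,2)$, which is precisely the sign-definiteness you computed. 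So the missing piece of your proof is not a computation but a citation: replace the twist-theorem machinery and the unresolved resonant cases by Ortega's theorem, and the two hypotheses you have already verified close the argument uniformly in $r$.
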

   \begin{proof}
      The linear part possess Floquet multipliers which place the system in either the elliptic or the parabolic case.
      In the last case, when $\sqrt{2/r^{3}}=k$ for some $k\in\mathbb{Z}$, there exists two independent eigenvectors associated to the eigenvalues $\lambda_{1,2}$. Then the conjugacy  of
      $A$ and $\pm I$ implies that the monodromy matrix is $A = \pm I$ and $q=(0,0)$ is stable (see \cite{Ortega2} for more details).

When the origin is of elliptic type for the linear system,
we observe that the coefficient of the term of order $3$ in equation (\ref{funcionencero}) is greater than zero for all $r\in(0,2)$, then  we can apply directly Ortega's theorem (see Appendix and \cite{Ortega}), and therefore we obtain that the equilibrium point $(0,0)$ is stable for the whole system, that is, including the nonlinear part. $\diamondsuit$\hfill
\end{proof}

We note that, in the case when   $\varepsilon=0$, stability interchanges in a weak sense appear as $r\to 0$,  since $(0,0)$ switches between  elliptic type when $r\neq (2/k^2)^{1/3}$ and parabolic type when $r= (2/k^2)^{1/3}$, $k\in\mathbb{Z}^+$, as noted before.

{In the case when $\varepsilon\neq 0$, as we mentioned earlier, the linear stability of $(0,0)$ is the same as in the classical Sitnikov problem, so it only depends on the eccentricity parameter $\varepsilon$.
The papers \cite{Alfaro,Hagel_Lothka,Kalas} state that there are stability interchanges when the size $r$ of the binary is kept fixed and the eccentricity $\varepsilon$ of the Keplerian ellipses approaches $1$.
We should point out that Theorem \ref{thm:thm1} does not apply to this case since the function  $r_\varepsilon(t,r)$ describing the motion of the primaries --- corresponding to $y(t,\lambda)$ in Theorem \ref{thm:thm1} ---   does not remain bounded in
the $C^2$ norm uniformly in $\varepsilon$.}


\bigskip

\appendix

\section{Floquet Theory} In order to have a self contained paper we add this appendix with the main results on Floquet theory, must of them are very well known for people in the field.

 Consider the linear system
  \begin{equation}\label{linearsystem}
     \dot{\mathbf{x}}=A(t)\mathbf{x}\,,\qquad \mathbf{x}\in\mathbb{R}^{2}\,,
  \end{equation}
  where $A(t)$ is a $T$-periodic matrix-valued function. Let $X(t)$ be the fundamental matrix solution
  \begin{equation}\label{fundmatrix} X(t)=\left(
            \begin{array}{cc}
              x_{1}(t) & x_{2}(t) \\
              y_{1}(t) & y_{2}(t) \\
            \end{array}
          \right)
  \end{equation}
  with the initial condition $X(0)=I$, the identity matrix. Let $\lambda_{1}$ and $\lambda_{2}$ be the eigenvalues (Floquet multipliers) of the monodromy matrix matrix $X(T)$   and let $\mu_{1},\mu_{2}$ (Floquet exponents) be such that $\lambda_{1}=e^{\mu_{1}T}, \lambda_{2}=e^{\mu_{2}T}$.

  \begin{Theorem}[Floquet's theorem]
     Suppose $X(t)$ is a fundamental matrix solution for (\ref{linearsystem}), then
        \[ X(t+T)=X(t)X(T)\]
     for all $t\in\mathbb{R}$. Also there exists a constant matrix $B$ such that $e^{TB}=X(T)$ and a $T$-periodic  matrix $P(t)$, so that, for all $t$,
        \[ X(t)=P(t)e^{Bt}. \]
  \end{Theorem}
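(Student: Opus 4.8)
The plan is to prove the two assertions separately: first the multiplicative (monodromy) relation $X(t+T)=X(t)X(T)$, and then the Floquet normal form $X(t)=P(t)e^{Bt}$.

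For the first assertion I would set $Y(t):=X(t+T)$ and show that it again solves \eqref{linearsystem}. Differentiating and using the $T$-periodicity of $A$ gives $\dot Y(t)=\dot X(t+T)=A(t+T)X(t+T)=A(t)Y(t)$, so $Y$ is a matrix solution of the same system. Since $X(t)$ is a fundamental matrix it is invertible for every $t$, so $W(t):=X(t)^{-1}Y(t)$ is well defined, and a direct computation gives $\dot W=X^{-1}(-\dot X\,X^{-1}Y+\dot Y)=X^{-1}(-AY+AY)=0$. Hence $W$ is a constant matrix $M$ and $Y(t)=X(t)M$; evaluating at $t=0$ and using $X(0)=I$ yields $M=Y(0)=X(T)$, which is exactly $X(t+T)=X(t)X(T)$.

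For the second assertion I would first record that the monodromy matrix $X(T)$ is invertible: by Liouville's formula $\det X(t)=\exp\!\big(\int_0^t \operatorname{tr}A(\sigma)\,\di\sigma\big)$, which never vanishes. Then I would invoke the existence of a matrix logarithm: an invertible matrix $C$ admits a matrix $L$ with $e^{L}=C$, so taking $C=X(T)$ and setting $B:=\tfrac1T L$ gives $e^{TB}=X(T)$. With $B$ in hand I would define $P(t):=X(t)e^{-Bt}$ and verify its periodicity using the first assertion: $P(t+T)=X(t+T)e^{-B(t+T)}=X(t)X(T)e^{-TB}e^{-Bt}=X(t)e^{TB}e^{-TB}e^{-Bt}=X(t)e^{-Bt}=P(t)$, where I used that powers of $e^{B}$ commute. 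Since $P(t)e^{Bt}=X(t)$ by construction, this is the claimed factorization.

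The step I expect to be the only genuine subtlety is the existence of the logarithm $B$ with $e^{TB}=X(T)$. Over the complex field this is automatic, since any invertible matrix has a logarithm (e.g. via its Jordan form, taking the principal logarithm on each block). If one insists on a \emph{real} matrix $B$, so that $P(t)$ stays real for a real system, this can fail precisely when $X(T)$ has a negative real eigenvalue with an odd-dimensional generalized eigenspace; the standard remedy, which I would mention, is to replace $T$ by $2T$ and work with $X(T)^2=X(2T)$, which always possesses a real logarithm, at the cost of a $2T$-periodic $P$. Either way, the factorization reduces \eqref{linearsystem} to the constant-coefficient system $\dot{\mathbf w}=B\mathbf w$ under the change $\mathbf x=P(t)\mathbf w$, which is the real content of the theorem.
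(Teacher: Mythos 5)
The paper states Floquet's theorem in its appendix explicitly \emph{without proof} (the appendix is announced as a collection of standard results ``without proofs''), so there is no in-paper argument to compare yours against. Your proof is the standard one and is correct: showing that $Y(t)=X(t+T)$ solves the same system and that $W=X^{-1}Y$ has zero derivative gives the monodromy relation (note this uses the normalization $X(0)=I$, which the paper's setup does impose even though the theorem's wording only says ``fundamental matrix''); Liouville's formula gives invertibility of $X(T)$; the existence of a complex matrix logarithm produces $B$ with $e^{TB}=X(T)$; and the periodicity check for $P(t)=X(t)e^{-Bt}$ is exactly right, the commutation of $e^{-Bt}$ with $e^{-BT}$ being automatic. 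You also correctly identify the only genuine subtlety, the real-versus-complex logarithm, and the standard $2T$ remedy. One small correction to that aside: the precise obstruction to a \emph{real} logarithm is that some Jordan block of $X(T)$ attached to a negative eigenvalue occurs an odd number of times, not that the generalized eigenspace is odd-dimensional --- a single $2\times 2$ Jordan block for the eigenvalue $-1$ has a two-dimensional (even) generalized eigenspace yet admits no real logarithm, whereas $-I_2$ does. This does not affect your proof, since the theorem as stated does not require $B$ to be real.
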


  \begin{Lemma}
     Let be $\lambda$ a Floquet multiplier for (\ref{linearsystem}) and $\lambda=e^{\mu T}$, then there exists a nontrivial solution $x(t)=e^{\mu t}p(t)$, with $p(t)$ a $T$-periodic function. Moreover, $x(t+T)=\lambda x(t)$.
  \end{Lemma}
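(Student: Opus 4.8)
The plan is to derive the result directly from Floquet's theorem, which has just been stated. First I would invoke the representation $X(t) = P(t) e^{Bt}$, where $P(t)$ is the $T$-periodic factor and $e^{TB} = X(T)$ is the monodromy matrix. Since $\lambda = e^{\mu T}$ is a Floquet multiplier, the exponent $\mu$ is an eigenvalue of $B$ (a Floquet exponent); I would fix a corresponding eigenvector $v \neq 0$, so that $Bv = \mu v$.

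Next I would define $x(t) = X(t) v$. This is automatically a solution of (\ref{linearsystem}), being a constant linear combination of the columns of the fundamental matrix, and it is nontrivial because $v \neq 0$ and $X(t)$ is invertible for every $t$. Substituting the Floquet factorization gives $x(t) = P(t) e^{Bt} v$. Because $v$ is an eigenvector of $B$, one has $e^{Bt} v = e^{\mu t} v$, and therefore $x(t) = e^{\mu t} \bigl(P(t) v\bigr) = e^{\mu t} p(t)$, where $p(t) := P(t) v$ inherits $T$-periodicity from $P(t)$. This establishes the first assertion.

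For the multiplier relation, I would use the cocycle identity $X(t+T) = X(t) X(T)$ supplied by Floquet's theorem. Then $x(t+T) = X(t+T) v = X(t) X(T) v$, and since $X(T) v = e^{TB} v = e^{\mu T} v = \lambda v$, I obtain $x(t+T) = \lambda X(t) v = \lambda x(t)$, as claimed.

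There is no genuine obstacle here; the single point meriting care is the choice of $v$ as an eigenvector of $B$ rather than merely of the monodromy matrix $X(T)$. This is precisely what guarantees $e^{Bt} v = e^{\mu t} v$ and hence the clean separation of $x(t)$ into an exponential times a $T$-periodic factor. Since the Floquet exponents $\mu_1, \mu_2$ satisfying $\lambda_i = e^{\mu_i T}$ are exactly the eigenvalues of $B$, such an eigenvector always exists, and the argument goes through verbatim for either multiplier.
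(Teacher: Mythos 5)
Your argument is correct; note that the paper itself states this lemma without proof (the appendix results are explicitly given ``without proofs''), so there is no in-paper argument to compare against. What you write is the standard derivation, and every step is sound: $x(t)=X(t)v$ is a nontrivial solution because $X(t)$ is invertible, the Floquet factorization $X(t)=P(t)e^{Bt}$ turns it into $e^{\mu t}P(t)v$, and the cocycle identity $X(t+T)=X(t)X(T)$ gives the multiplier relation. The one place where you are slightly imprecise is the claim that $\mu$ is an eigenvalue of $B$. For a fixed matrix logarithm $B$ of $X(T)$, the eigenvalues of $B$ are particular branch choices $\nu$ with $e^{\nu T}=\lambda$, and the $\mu$ in the statement could differ from such a $\nu$ by $2\pi i k/T$; the conclusion survives because the discrepancy $e^{(\nu-\mu)t}=e^{-2\pi i k t/T}$ is itself $T$-periodic and can be absorbed into $p(t)$. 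A way to sidestep this entirely is to take $v$ as an eigenvector of the monodromy matrix $X(T)$ itself (which always exists, even in the Jordan-block case), set $x(t)=X(t)v$, obtain $x(t+T)=\lambda x(t)$ directly from the cocycle identity, and then verify by hand that $p(t):=e^{-\mu t}x(t)$ satisfies $p(t+T)=e^{-\mu T}\lambda\, p(t)=p(t)$; this proves the lemma for every choice of $\mu$ with $e^{\mu T}=\lambda$ without invoking the spectral structure of $B$ at all.
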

  Thus, the Floquet multipliers lead to the following characterization:
  \begin{itemize}
     \item If $|\lambda|<1\Leftrightarrow\textrm{Re}(\mu)<0$ then $x(t)\rightarrow 0$ as $t\rightarrow \infty$.
     \item If $|\lambda|=1\Leftrightarrow\textrm{Re}(\mu)=0$ then $x(t)$ is a pseudo-periodic, bounded solution. In particular when $\lambda=1$ then $x(t)$ is $T$-periodic and when $\lambda=-1$ then $x(t)$ is $2T$-periodic.
     \item If $|\lambda|>1$ then $\textrm{Re}(\mu)>0$ and therefore $x(t)\rightarrow \infty$ as $t
         \rightarrow \infty$, an unbounded solution.
  \end{itemize}

  \begin{Lemma}
     If  the Floquet multipliers satisfy $\lambda_{1}\neq\lambda_{2}$,  then the equation (\ref{linearsystem}) has two linearly independent solutions
         \[ x_{1}(t)=p_{1}(t)e^{\mu_{1}t}\,,\qquad x_{2}(t)=p_{2}(t)e^{\mu_{2}t}\,, \]
         where $p_{1}(t)$ and $p_{2}(t)$ are $T$-periodic functions and $\mu_{1}$ and $\mu_{2}$ are the respective Floquet exponents.
  \end{Lemma}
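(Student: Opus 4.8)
The plan is to reduce everything to the spectral decomposition of the monodromy matrix $X(T)$, invoking the preceding lemma to convert each eigenvector into a solution of the required Floquet form. Since by hypothesis the Floquet multipliers are distinct, $\lambda_{1}\neq\lambda_{2}$, the $2\times 2$ matrix $X(T)$ has two distinct eigenvalues and is therefore diagonalizable; this is exactly the point at which the hypothesis enters, and it is precisely what fails in the degenerate case $\lambda_{1}=\lambda_{2}$.

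First I would pick eigenvectors $v_{1},v_{2}$ of $X(T)$ associated with $\lambda_{1},\lambda_{2}$, which are automatically linearly independent, and form the two solutions $x_{i}(t)=X(t)v_{i}$, $i=1,2$, of (\ref{linearsystem}). Using the identity $X(t+T)=X(t)X(T)$ from Floquet's theorem together with $X(T)v_{i}=\lambda_{i}v_{i}$, I would compute
\[
	x_{i}(t+T)=X(t+T)v_{i}=X(t)X(T)v_{i}=\lambda_{i}X(t)v_{i}=\lambda_{i}x_{i}(t),
\]
so that $x_{i}(t+T)=e^{\mu_{i}T}x_{i}(t)$, where $\mu_{i}$ is the Floquet exponent with $\lambda_{i}=e^{\mu_{i}T}$. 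This is the same mechanism as in the previous lemma, now applied simultaneously to both eigendirections.

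Next I would set $p_{i}(t)=e^{-\mu_{i}t}x_{i}(t)$ and verify directly that $p_{i}$ is $T$-periodic:
\[
	p_{i}(t+T)=e^{-\mu_{i}(t+T)}x_{i}(t+T)=e^{-\mu_{i}(t+T)}e^{\mu_{i}T}x_{i}(t)=e^{-\mu_{i}t}x_{i}(t)=p_{i}(t).
\]
By construction $x_{i}(t)=e^{\mu_{i}t}p_{i}(t)$, which is the asserted form. Finally, linear independence of $x_{1},x_{2}$ follows from evaluation at $t=0$: since $x_{i}(0)=v_{i}$ and $v_{1},v_{2}$ are independent, the Wronskian is nonzero at $t=0$ and hence, by Liouville's formula, for all $t$, so the two solutions are independent.

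I do not expect a genuine obstacle, as the argument is a direct application of the already-stated Floquet's theorem and the preceding lemma; the only point requiring a little care is that the eigenvectors, exponents, and periodic factors may be complex when $\lambda_{1},\lambda_{2}$ form a complex-conjugate pair, but this causes no difficulty since the Floquet form is understood over $\mathbb{C}$. The essential observation is simply that distinctness of the multipliers is precisely what furnishes a basis of eigenvectors of the monodromy matrix, from which the two independent Floquet solutions are read off.
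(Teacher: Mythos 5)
Your proposal is correct and complete: the hypothesis $\lambda_{1}\neq\lambda_{2}$ gives a basis of eigenvectors of the monodromy matrix, the identity $X(t+T)=X(t)X(T)$ yields $x_{i}(t+T)=\lambda_{i}x_{i}(t)$, and the factors $p_{i}(t)=e^{-\mu_{i}t}x_{i}(t)$ are then $T$-periodic, with independence checked at $t=0$. Note that the paper states this lemma in its appendix explicitly \emph{without} proof, so there is no in-paper argument to compare against; your argument is the standard one and fills that gap correctly, including the appropriate caveat that the eigenvectors, exponents, and periodic factors are to be taken over $\mathbb{C}$ when the multipliers form a complex-conjugate pair.
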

  In this way, the stability of the solution to (\ref{linearsystem}) is
  \begin{itemize}
     \item Asymptotically stable if $|\lambda_{i}|<1$ for $i=1,2$.
     \item Lyapunov stable if $|\lambda_{i}|\leq1$ for $i=1,2$, or if $|\lambda_{i}|=1$ and  the algebraic multiplicity equals the geometric multiplicity.
     \item Unstable if $|\lambda_{i}|>1$ for at least one $i$, or if $|\lambda_{i}|=1$ and the algebraic multiplicity is greater than the geometric multiplicity.
  \end{itemize}

   A particular case for the equation (\ref{linearsystem}) is the so-called Hill's equation, namely the periodic linear second order differential equation:
   \begin{equation}\label{ecapen}
      \ddot{z}+f(t)z=0\,,
   \end{equation}
   where $f(t)$ is a $\pi$-periodic function. Equation (\ref{ecapen}), as a first order system, is
  \begin{equation}\label{linealecapen}
      \dot{\mathbf{v}}=A(t)\mathbf{v}\,,\qquad \mathbf{v}=\left(\begin{array}{c}x \\y\\ \end{array}\right)\,,\qquad
      A(t)=\left(
               \begin{array}{cc}
                 0 & 1 \\
                 -f(t) & 0 \\
               \end{array}
             \right)\,.
  \end{equation}
  Let (\ref{fundmatrix}) be a fundamental matrix solution of (\ref{linealecapen}). The monodromy matrix corresponds to $X(\pi)$ and, as before, let $\lambda_{1}$ and $\lambda_{2}$ be the Floquet multipliers associated to (\ref{linealecapen}) and $\mu_{1}$ and $\mu_{2}$ be the corresponding Floquet exponents.

  In this paper we assume that $f(t)$ is an even function; from Floquet theory we know that $x_{1}$ is an even and $x_{2}$ is an odd function. Also, the trace of $A(t)$ vanishes and
  \[
     \lambda_{1}\lambda_{2}=e^{\int_{0}^{\pi}\textrm{tr}(A(t))dt}=1\,.
  \]
  Assuming $\mu_1=a+ib$ is the Floquet exponent  corresponding to the Floquet multiplier $\lambda_{1}$, the general solution to (\ref{linealecapen}) is characterized as follows:
  \begin{itemize}
     \item Elliptic type: $\lambda_{1}\in\mathbb{C}\setminus\mathbb{R}$, with $|\lambda_{1}|=1$ (and $\lambda_{2}=\bar{\lambda}_{1}$). The general solution is pseudo-periodic and can be written as
         \[
            \mathbf{v}(t)=c_{1}\textrm{Re}\left(\mathbf{p}(t)e^{ibt/\pi}\right)+c_{2}\textrm{Im}\left(\mathbf{p}(t)e^{ibt/\pi}\right)\,.
         \]
         The origin is Lyapunov stable.
     \item Parabolic type: $\lambda_{1}=\lambda_{2}=\pm 1$.
        \begin{itemize}
           \item If $\lambda_{1}=1$ and there are two linearly independent eigenvectors of the monodromy matrix, the general solution is
               \[\mathbf{v}(t)=c_{1}\mathbf{p}_{1}(t)+c_{2}\mathbf{p}_{2}(t)\,,\]
               and is $\pi$-periodic and Lyapunov stable.
           \item If $\lambda_{1}=1$ and there is just one eigenvector associated to this eigenvalue, thus the general solution is
               \[\mathbf{v}(t)=(c_{1}+c_{2}t)\mathbf{p}_{1}(t)+c_{2}\mathbf{p}_{2}(t)\,,\]
               and is unstable.
           \item If $\lambda_{2}=-1$ and there are two linearly independent eigenvectors of the monodromy matrix, the general solution is
               \[ \mathbf{v}(t)=c_{1}\mathbf{p}_{1}(t)e^{it}+c_{2}\mathbf{p}_{2}(t)e^{it}\,,\]
               and is $2\pi$-periodic and Lyapunov stable.
           \item If $\lambda_{1}=-1$ and there is just one eigenvector associated to this eigenvalue, thus the general solution is
               \[\mathbf{v}(t)=(c_{1}+c_{2}t)\mathbf{p}_{1}(t)e^{it}+c_{2}\mathbf{p}_{2}(t)e^{it}\,,\]
               and is unstable.
        \end{itemize}
     \item Hyperbolic type: $\lambda_{1}\in\mathbb{R}$, but $|\lambda_{1}|\neq 1$ (and $\lambda_{2}=1/\lambda_{1}$).
         \begin{itemize}
            \item If $\lambda_{1}>1$, then the solution is
            \[
            \mathbf{v}(t)=c_{1}\mathbf{p}_{1}(t)e^{\mu_{1} t}+c_{2}\mathbf{p}_{2}(t)e^{-\mu_{1} t}\,.
            \]
            \item If $\lambda_{1}<-1$, then the solution is
            \[
            \mathbf{v}(t)=c_{1}\mathbf{p}_{1}(t)e^{\mu_{1} t}e^{it}+c_{2}\mathbf{p}_{2}(t)e^{-\mu_{1} t}e^{it}\,.
            \]
         \end{itemize}
         Thus, the origin is unstable.
  \end{itemize}

\medskip

A useful result from Rafael Ortega \cite{Ortega}  considers the nonlinear Hill equation
\begin{equation}\label{ecortega}
   y''+a(t)y+c(t)y^{2n-1}+d(t,y)=0,
\end{equation}
with $n\geq2$,  where the functions $a,c:\mathbb{R}\rightarrow\mathbb{R}$ are continuous,  $T$-periodic, and
$\int_{0}^{T}|c(t)|\neq 0$,
and the function $d:\mathbb{R}\times(-\epsilon,\epsilon)\rightarrow\mathbb{R}$, for $\epsilon>0$, is continuous, has continuous derivatives of all orders respect to $y$, is $T$-periodic function respect to $t$, and
$d(t,y)=\mathcal{O}\left(|y|^{2n}\right)$  as
$y\rightarrow0$ uniformly with respect to $t\in\mathbb{R}$.

The linear part around the solution $y=0$ of (\ref{ecortega}) is
\begin{equation}\label{ortegalineal}
   y''+a(t)y=0\,.
\end{equation}

\begin{Theorem}[Ortega's Theorem]
   Assume the following:
   \begin{enumerate}
      \item The equation (\ref{ortegalineal}) is stable.
      \item $c\geq0$ or $c\leq0$.
   \end{enumerate}
   Then $y=0$ is a stable solution of (\ref{ecortega}).
\end{Theorem}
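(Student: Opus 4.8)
The plan is to reformulate the claim as the Lyapunov stability of the origin regarded as a fixed point of the time-$T$ Poincar\'e map of (\ref{ecortega}), and then to let the constant sign of $c$ supply the nonlinear confinement that the merely stable (not asymptotically stable) linear part cannot. Writing (\ref{ecortega}) as a planar system in $(y,p)$ with $p=y'$, the equation is Hamiltonian with $H(t,y,p)=\tfrac12 p^2+\tfrac12 a(t)y^2+\tfrac{c(t)}{2n}y^{2n}+D(t,y)$, where $\partial_y D=d$; hence its time-$T$ map $\mathcal{P}$ is an area-preserving diffeomorphism near $0$ fixing $0$, and stability of $y=0$ is equivalent to stability of $0$ under $\mathcal{P}$.

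First I would read off the linear part $D\mathcal{P}(0)$, whose eigenvalues are the Floquet multipliers of (\ref{ortegalineal}). By hypothesis (1) and the Floquet classification recalled in the Appendix, these lie on the unit circle, and the only possibilities compatible with stability are: (a) the elliptic case, eigenvalues $e^{\pm i\alpha}$ with $\alpha\in(0,\pi)$; or (b) the diagonalizable parabolic case $D\mathcal{P}(0)=\pm I$. In the elliptic case I would bring $\mathcal{P}$ to Birkhoff normal form in action--angle variables $(r,\phi)$,
\[ (r,\phi)\longmapsto\big(r,\ \phi+\alpha+\beta\,r^{\,n-1}+o(r^{\,n-1})\big), \]
and compute the first twist coefficient $\beta$. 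The crucial computation is that, writing the normalized elliptic solution of (\ref{ortegalineal}) via Floquet theory as $y=\mathrm{Re}\!\big(\psi(t)e^{i\omega t}\big)$ with $\psi$ being $T$-periodic, one gets
\[ \beta=\kappa_n\int_0^T c(t)\,|\psi(t)|^{2n}\,\di t,\qquad \kappa_n\neq0. \]
Since $c$ is of one sign, $\int_0^T|c|\neq0$, and $|\psi|^{2n}>0$, this integral is nonzero, so $\beta\neq0$: this is exactly where hypothesis (2) is used.

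With nonzero twist in hand, Moser's invariant-curve theorem produces, for a set of frequencies of positive measure accumulating at $\alpha$, a nested family of $\mathcal{P}$-invariant closed curves shrinking to $0$; these curves trap every nearby orbit and yield stability in the elliptic case away from the strong (order $3$ and $4$) resonances. The hard part will be the remaining cases: the strong resonances $\alpha\in\{2\pi/3,\pi/2\}$, where extra resonant monomials can in principle destroy stability, and the parabolic case $D\mathcal{P}(0)=\pm I$, where Moser's theorem does not apply at all. In these regimes I would pass to the appropriate resonant normal form (or, for $\pm I$, the normal form of $\mathcal{P}^2$) and show that the definite sign of $c$ forces the averaged normal-form Hamiltonian to contain a sign-definite term $\sim\big(\int_0^T c\,|\psi|^{2n}\big)\,r^{\,n}$, which acts as a confining Lyapunov-type function whose sub-level sets enclose $0$, the remainder $d=O(|y|^{2n})$ being negligible against it. I expect this confinement argument at the strong resonances and the parabolic point---rather than the clean non-resonant KAM step---to be the central difficulty, precisely because it is there that the single-sign hypothesis on $c$ must carry the entire weight of the theorem.
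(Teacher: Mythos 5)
The paper does not actually prove this statement: the Appendix announces that its results are given without proofs, and this theorem is quoted from Ortega's article \cite{Ortega}, which is the authors' source. Measured against that source, your proposal is in outline a faithful reconstruction of Ortega's own argument: pass to the area-preserving Poincar\'e map, note that linear stability forces the multipliers to be either non-real on the unit circle or $\pm 1$ with diagonalizable monodromy, compute the first Birkhoff coefficient and identify it (up to a nonzero constant $\kappa_n$) with $\int_0^T c(t)\,|\psi(t)|^{2n}\,\di t$ --- nonzero exactly because $c$ has one sign and $\int_0^T|c|\neq 0$ --- then invoke Moser's invariant curve theorem, with a separate confinement-type argument at the resonances and at the parabolic case (where Ortega relies on stability criteria for degenerate fixed points of area-preserving maps in the spirit of Sim\'o, essentially what you sketch). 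One imprecision worth fixing: the dangerous resonances are not in general only the third- and fourth-order ones; for a leading nonlinearity of degree $2n-1$, resonant monomials that can compete with the twist term $\beta r^{\,n-1}$ arise whenever the multiplier is a $k$-th root of unity with $k\le 2n$, so the list of strong resonances grows with $n$. With that adjustment your outline is sound, though each of the three regimes (non-resonant elliptic, resonant elliptic, parabolic $\pm I$) still requires substantial work to turn into a complete proof.
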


\subsection*{Acknowledgements} We thank to the anonymous referees, their remarks and suggestions help us to improve this paper. Research of M.G. was partially supported by NSF grant  DMS-1515851.
M. L. gratefully acknowledges support by the NSF grant DMS-1412542.
The fourth author (EPC) has received partial support by the Asociaci\'on Mexicana de Cultura A.C.
Parts of this work have been done while the authors visited CIMAT, Guanajuato, LFP and EPC visit Yeshiva University and M.G. visited UAM-I in Mexico City. All authors are grateful for the hospitality of these institutions.

\newpage


\end{document}